\theoremstyle{remark}
\newtheorem{example}[subsection]{\bf Example}
\newtheorem{rem}[subsection]{\bf Remark}
\newtheorem{convention}[subsection]{\bf Convention}
\theoremstyle{definition}
\newtheorem{notn}[subsection]{\bf Notation}
\newtheorem*{dfn}{Definition}
\theoremstyle{plain}
\newtheorem{theorem}[subsection]{Theorem}
\newtheorem{lemma}[subsection]{Lemma}
\newenvironment{numequation}{\addtocounter{subsection}{1}
\begin{equation}}{\end{equation}}
\newcommand{\bbN}{{\mathbb N}}
\newcommand{\bbQ}{{\mathbb Q}}
\newcommand{\bbZ}{{\mathbb Z}}
\newcommand{\bG}{{\bf G}}
\newcommand{\bL}{{\bf L}}
\newcommand{\bP}{{\bf P}}
\newcommand{\bS}{{\bf S}}
\newcommand{\bT}{{\bf T}}
\newcommand{\bU}{{\bf U}}
\newcommand{\frb}{{\mathfrak b}}
\newcommand{\frg}{{\mathfrak g}}
\newcommand{\frl}{{\mathfrak l}}
\newcommand{\frm}{{\mathfrak m}}
\newcommand{\frn}{{\mathfrak n}}
\newcommand{\frp}{{\mathfrak p}}
\newcommand{\frq}{{\mathfrak q}}
\newcommand{\frs}{{\mathfrak s}}
\newcommand{\frt}{{\mathfrak t}}
\newcommand{\frx}{{\mathfrak x}}
\newcommand{\frz}{{\mathfrak z}}
\newcommand{\cC}{{\mathcal C}}
\newcommand{\cD}{{\mathcal D}}
\newcommand{\cF}{{\mathcal F}}
\newcommand{\cO}{{\mathcal O}}
\newcommand{\cT}{{\mathcal T}}
\newcommand{\alg}{{\rm alg}}
\newcommand{\cl}{{\rm cl}}
\newcommand{\End}{{\rm End}}
\newcommand{\hra}{\hookrightarrow}
\newcommand{\id}{{\rm id}}
\newcommand{\im}{{\rm im}}
\newcommand{\ind}{{\rm ind}}
\newcommand{\lra}{\longrightarrow}
\newcommand{\Max}{{\rm Max}}
\newcommand{\midc}{{\; | \;}}
\newcommand{\ot}{\otimes}
\newcommand{\pr}{{\rm pr}}
\newcommand{\Qp}{{\bbQ_p}}
\newcommand{\ra}{\rightarrow}
\newcommand{\Rep}{{\rm Rep}}
\newcommand{\sm}{{\rm sm}}
\newcommand{\triv}{{\bf 1}}
\newcommand{\Z}{{\mathbb Z}}
\newcommand{\Bcoad}{B\mbox{-}{\rm mod}^{\rm coad}}
\newcommand{\Bcoadfl}{B\mbox{-}{\rm mod}^{\rm coad}_{\rm f.l.}}
\newcommand{\Bmod}{B\mbox{-}{\rm mod}}
\newcommand{\chcFGP}{{\check{\cF}^G_P}}
\newcommand{\chFGQi}{{\check{\cF}^G_{Q_i}}}
\newcommand{\chcFGB}{{\check{\cF}^G_B}}
\newcommand{\chcFGQ}{{\check{\cF}^G_Q}}
\newcommand{\cFGP}{{{\cF}_P^G}}
\newcommand{\cOp}{{\cO^\frp}}
\newcommand{\cTlamu}{\cT_\lambda^\mu}
\newcommand{\cTmula}{\cT_\mu^\lambda}
\newcommand{\DG}{{D(G)}}
\newcommand{\DGcoadfl}{{D(G)}\mbox{-}{\rm mod}^{\rm coad}_{\rm f.l.}}
\newcommand{\DGmod}{{D(G)}\mbox{-}{\rm mod}}
\newcommand{\DGmodlambda}{\DGmod_{|\lambda|}}
\newcommand{\DGmodmu}{\DGmod_{|\mu|}}
\newcommand{\DgB}{D(\mathfrak{g},B)}
\newcommand{\DgP}{D(\mathfrak{g},P)}
\newcommand{\Dlambda}{D(G)\mbox{-}{\rm mod}_{|\lambda|}}
\newcommand{\Dzfin}{D(G)\mbox{-}{\rm mod}_{\frz{\mbox{-}{\rm fin}}}}
\newcommand{\JH}{\mathcal{J}\mathcal{H}}
\newcommand{\Oalglambda}{\cO_{{\rm alg}, |\lambda|}}
\newcommand{\Oalgmu}{\cO_{{\rm alg}, |\mu|}}
\newcommand{\OPalg}{{{\mathcal O}_{\rm alg}^{\mathfrak p}}}
\newcommand{\onu}{\overline{\nu}}
\newcommand{\OGB}{\overline{\cO}_{\rm{alg}}(G,B)}
\newcommand{\OGBlambda}{\OGB_{|\lambda|}}
\newcommand{\OGBmu}{\OGB_{|\mu|}}
\newcommand{\sadm}{{{\rm s}\mbox{-}{\rm adm}}}
\newcommand{\Tlamu}{T_\lambda^\mu}
\newcommand{\Tmula}{T_\mu^\lambda}
\newcommand{\UgE}{{U(\frg_E)}}
\newcommand{\Ugzfin}{U(\frg_E)\mbox{-}{\rm mod}_{\frz{\mbox{-}{\rm fin}}}}
\newcommand{\Uglambda}{U(\frg_E)\mbox{-}{\rm mod}_{|\lambda|}}
\newcommand{\UgEmod}{U(\frg_E)\mbox{-{\rm mod}}}
\newcommand{\Ugfrm}{U(\frg_E)\mbox{-}{\rm mod}_\frm}
\newcommand{\Ugmod}{U(\frg)\mbox{-{\rm mod}}}
\newcommand{\UpE}{{U(\frp_E)}}
\begin{document}

\title{The Functor $\chcFGP$ at the level of $K_0$ }
\author{Akash Jena}
\address{Department of Mathematics, Indiana University, Bloomington}
\curraddr{}
\email{akjena@iu.edu}
\thanks{}

\begin{abstract}
Let $G$ be a $p$-adic Lie group with reductive Lie algebra $\frg$. Denote by $D(G)$ the locally analytic distribution algebra of $G$. Orlik-Strauch and Agrawal-Strauch have studied certain exact  functors defined on various categories of $\frg$-representations with image in the category of locally analytic  $G$-representations or $D(G)$-modules. In this paper we prove that for suitably defined categories of $D(G)$-modules, this functor gives rise to injective homomorphisms at the level of Grothendieck groups. We also explain how this functor interacts with translation functors at the level of Grothendieck groups.
\end{abstract}

\maketitle

\tableofcontents

\section{Introduction}\label{Intro}
Let $F$ be a finite extension of $\Qp$ and consider a split reductive group
$\bf{G}$ over $F$. Let $\bf{P}\supset\bf{B}\supset \bf{T}$ be a parabolic subgroup, a Borel subgroup, and a maximal torus respectively. Denote by $\bL_\bP$ the Levi subgroup of $\bP$ containing $\bT$, and by $\bU_\bP$ the unipotent radical of $\bP$. Let $\frg,\frp, \frb$ and $\frt$ be the Lie algebras of the corresponding algebraic groups. Let $G=\textbf{G}(F)$, $P=\textbf{P}(F)$, $B=\textbf{B}(F)$, $T=\textbf{T}(F)$, $L_P = \bL_\bP(F)$, and $U_P =\bU_\bP(F)$ be the corresponding groups of $F$-valued points.  Furthermore, we fix once and for all a finite extension $E$ of $F$ which will be our field of coefficients. The base change of an $F$-vector space to $E$ will always be denoted by the subscript $E$, for example $\frg_E=\frg\otimes_F E$ and $\frb_E=\frb\otimes_F E$ etc. A weight $\lambda\in\frt^*$ is said to be algebraic if it is in the image of the canonical map $\im(X^*(\bT) \ra \frt^*)$. We denote the set of algebraic weights as $\frt_\alg^*$.

\vskip8pt
We are interested in locally analytic representations of $G$ on $E$-vector spaces, cf. \cite{S-T1,S-T2}. However, instead of working directly with locally analytic representations, we will be working in the framework of $D(G)$-modules where $D(G):=C^{\text{la}}(G,E)_b'$ is the locally analytic distribution algebra, which is the strong dual of the space of $E$-valued locally $F$-analytic functions on $G$, cf. \cite[Cor. 3.3]{S-T1}. 

\vskip 8pt
In the spirit of \cite{AS, O-S2}, we study the functor $\chcFGP$ defined as follows.
\[\begin{array}{ccc}
 \chcFGP   : \OPalg \times \Rep^{\sm}_E(L_P)^\sadm & \lra & \DGmod  \\
     (M,V) & \rightsquigarrow & \DG \ot_{\DgP} (M \ot_E V') \;.
\end{array}\]
Here $\OPalg$ is the full subcategory of the Bernstein-Gelfand-Gelfand category $\cO$ for the Lie algebra $\frg_E$, consisting of modules $M$ with algebraic  weights and on which the action of $\frp_E$ is locally finite. Furthermore,  $\Rep^{\sm}_E(L_P)^\sadm$ is the category of  smooth strongly admissible representations on $E$-vector spaces of the subgroup $L_P$. Here $\DgP$ is the subring of $D(G)$ generated by the universal envelopping algebra $U(\frg_E)$ and $D(P)$. Functors of this form have turned out to be very useful towards various aspects of locally analytic representations and the $p$-adic local Langlands program, see \cite{Breuil,SchraenGL3} for example.

\vskip8pt
The functor $\chcFGP$ is exact in both arguments, cf. \cite[Thm. 4.2.4]{AS}. When the second argument of $\chcFGP$ is the trivial representation $\bf1$ of $P$, we write $\chcFGP(M)$ instead of $\chcFGP(M, \bf1)$. This gives an exact functor $\chcFGP: \OPalg \lra \DGmod$, defined by $M\rightsquigarrow \chcFGP(M)$.   

\vskip 8pt
The image of $\chcFGP$ lies inside the category $\DGcoadfl$, which is the category of coadmissible $D(G)$-modules of finite length,  cf. \ref{image finite length}. We consider the induced map on Grothendieck groups, which we also denote by $\chcFGP$. 
\[\begin{array}{ccc}
 \chcFGP   : K_0(\OPalg) & \lra & K_0(\DGcoadfl)  \\
     \text{[}M\text{]} & \longmapsto & [\chcFGP(M)] \;.
\end{array}\]
Our main result is the following.  
\begin{theorem}[\ref{our main result}]\label{main result intro}
The induced map $\chcFGP:K_0(\OPalg)\ra K_0(\DGcoadfl)$ is injective.
\end{theorem}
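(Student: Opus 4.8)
The plan is to compute the image of the canonical basis of $K_0(\OPalg)$ — the classes $[L(\lambda)]$ of the simple modules, indexed by the algebraic weights $\lambda$ that are dominant integral for $\frl_\frp$ — and to show that this image is part of a basis of $K_0(\DGcoadfl)$. Concretely, I would attach to each such $\lambda$ a distinguished Jordan--Hölder constituent $S_\lambda$ of $\chcFGP(L(\lambda))$, occurring with multiplicity one, in such a way that $S_\lambda$ is \emph{pinned to} $\lambda$: it never occurs in $\chcFGP(L(\mu))$ for $\mu\ne\lambda$. Granting this, the matrix $\bigl([\chcFGP(L(\lambda))]:S_\mu\bigr)_{\lambda,\mu}$ is the identity, so $\chcFGP$ is split injective on Grothendieck groups.

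The constituent $S_\lambda$ is produced from the Jordan--Hölder computation for these functors due to Orlik--Strauch and Agrawal--Strauch. Let $\frp(\lambda)\supseteq\frp$ denote the largest parabolic subalgebra to which the $\frp$-action on $L(\lambda)$ extends, equivalently with $L(\lambda)\in\cO^{\frp(\lambda)}$. The two inputs I would isolate from \cite{O-S2, AS} are: (i) every composition factor of $\chcFGP(L(\lambda))$ has the shape $\check{\cF}^G_{\frq}(L(\lambda),W)$ with $\frp\subseteq\frq\subseteq\frp(\lambda)$ and $W$ an irreducible smooth representation of $L_\frq$ — in particular it carries the \emph{same} simple $\frg$-module $L(\lambda)$; and (ii) two such irreducible objects satisfy $\check{\cF}^G_{\frq_1}(L(\lambda_1),W_1)\cong\check{\cF}^G_{\frq_2}(L(\lambda_2),W_2)$, written in normal form, only if $\lambda_1=\lambda_2$ (and then $\frq_1=\frq_2$, $W_1\cong W_2$). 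Using the transitivity of these functors, the exactness of $\check{\cF}^G_{\frp(\lambda)}(L(\lambda),-)$ in the second variable, and the fact that the second argument here is $\triv$, one reduces to the maximal parabolic and obtains
\[
\bigl[\chcFGP(L(\lambda))\bigr]\;=\;\sum_{\frp\subseteq\frS\subseteq\frp(\lambda)}\Bigl[\check{\cF}^G_{\frp(\lambda)}\bigl(L(\lambda),\,v^{L_{\frp(\lambda)}}_{L_\frS}\bigr)\Bigr],
\]
where $v^{L_{\frp(\lambda)}}_{L_\frS}$ is the generalized Steinberg representation of $L_{\frp(\lambda)}$ attached to the parabolic $P_\frS\cap L_{\frp(\lambda)}$. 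Each summand is a simple object of $\DGcoadfl$: $L(\lambda)$ is maximal for $\frp(\lambda)$ and the $v^{L_{\frp(\lambda)}}_{L_\frS}$ are irreducible smooth representations (Casselman), so the Orlik--Strauch irreducibility criterion applies. I would then set $S_\lambda:=\check{\cF}^G_{\frp(\lambda)}(L(\lambda),\triv)$, the $\frS=\frp(\lambda)$ term; it occurs with multiplicity exactly one in the displayed sum, since the $v^{L_{\frp(\lambda)}}_{L_\frS}$ are pairwise non-isomorphic and only $\frS=\frp(\lambda)$ yields the trivial representation.

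To finish: for $\mu\ne\lambda$, by (i) every composition factor of $\chcFGP(L(\mu))$ is of the form $\check{\cF}^G_{\frp(\mu)}(L(\mu),W)$, and by (ii) such an object cannot be isomorphic to $S_\lambda=\check{\cF}^G_{\frp(\lambda)}(L(\lambda),\triv)$ because $L(\mu)\not\cong L(\lambda)$; hence the multiplicity of $S_\lambda$ in $\chcFGP(L(\mu))$ is $\delta_{\lambda\mu}$, as required. I expect the genuine work — and the main obstacle — to be step (i) and the displayed composition-series formula: one must extract from the Jordan--Hölder theorems of \cite{O-S2, AS} both that the constituents of $\chcFGP(L(\lambda))$ all carry the single weight $\lambda$ and that, for trivial smooth coefficients, the series collapses onto $\frp(\lambda)$ with generalized-Steinberg multiplicities, and one must check that the relevant irreducibility, transitivity and classification statements survive the passage to the category $\DGmod$ (rather than to locally analytic $G$-representations).
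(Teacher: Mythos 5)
Your proposal is correct and takes essentially the same route as the paper: both start from the basis $\{[L(\lambda)]\}$ of $K_0(\OPalg)$, use the transitivity formula of Agrawal--Strauch to rewrite $\chcFGP(L(\lambda))$ with coefficients over the maximal parabolic for $L(\lambda)$, and then invoke the Orlik--Strauch irreducibility theorem \cite[5.8]{O-S2} together with Breuil's classification \cite[Cor.\ 2.7]{Breuil}. The only difference is bookkeeping: you pin down a single distinguished constituent $S_\lambda=\check{\cF}^G_{\frp(\lambda)}(L(\lambda),\triv)$ with multiplicity $\delta_{\lambda\mu}$, whereas the paper observes directly that \emph{all} composition factors of $\chcFGP(L(\lambda))$ are pairwise non-isomorphic to those of $\chcFGP(L(\mu))$ for $\mu\neq\lambda$ (both facts are immediate from the same Breuil result), so the images of the basis vectors have disjoint supports in the free abelian group $K_0(\DGcoadfl)$.
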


Our result follows from an application of \cite[Cor. 2.7]{Breuil} (cf. \ref{Breuil 2.7}) and the irreduciblity result in \cite[5.8]{O-S2}.

\vskip 8pt
We also prove a version of Theorem \ref{main result intro} for various categories indexed by weights (or rather the dot orbits of weights, see \ref{translation subseection}) as explained below. Denote by $\frz_E$ the center of $\UgE$. For $\lambda\in \frt_E^*$, let $\chi_\lambda$ be the central character of $\frz_E$ associated to $\lambda$. Following the notation of \cite{JLS}, we denote by $\Uglambda$ (and $\DGmodlambda$) the full subcategory of $\UgEmod$ (and $\DGmod$) consisting of modules $M$ such that each $m\in M$ is annihilated by sufficiently large powers of $\ker (\chi_\lambda)$. Let $\Oalglambda$ be the full subcategory of $\cO$ consisting of modules with algebraic weights which lie in $\Uglambda$. We show that the image of $\Oalglambda$ under $\chcFGB$ (for a Borel subgroup $B\subset G$) lies in $\OGBlambda$. Here $\OGBlambda$ denotes the smallest full subcategory of $\DGcoadfl$ consisting of modules $M\in \DGmodlambda$ with the following properties:
\vskip 6pt
(i) It contains all modules $\chcFGB(M)$, where $M$ is in $\cO_\alg$. 

\vskip 4pt

(ii) It is closed under taking subquotients and extensions.
\vskip 8pt
With this notation in place we have the following result which is a consequence of \ref{main result intro}.

\vskip8pt 
\begin{theorem}[\ref{main result v2}] \label{main result v2 intro}
The induced map $\chcFGB:K_0(\Oalglambda)\ra K_0(\OGBlambda)$ is injective.
\end{theorem}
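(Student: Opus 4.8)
The plan is to deduce Theorem~\ref{main result v2 intro} from Theorem~\ref{main result intro} by exploiting the block decomposition of the BGG category $\cO$. First I would recall that $\cO$, and hence its full subcategory $\cO_\alg$ of modules with algebraic weights, decomposes as a direct sum of its blocks, one for each generalized central character; by Harish--Chandra these correspond to the dot-orbits $[\mu]$ of weights under the Weyl group $W$, so that $\cO_\alg = \bigoplus_{[\mu]}\Oalgmu$, the sum running over dot-orbits of algebraic weights. The one point to check here is that algebraicity is preserved along a dot-orbit: if $\lambda$ is algebraic then so is $w\bcdot\lambda = w(\lambda)-(\rho-w\rho)$, since $w(\lambda)$ is algebraic (the Weyl group preserves $X^*(\bT)$) and $\rho-w\rho$ is a sum of positive roots, hence lies in $X^*(\bT)$. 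As every object of $\cO$ has finite length, passing to Grothendieck groups gives $K_0(\cO_\alg)=\bigoplus_{[\mu]}K_0(\Oalgmu)$; in particular the inclusion $\Oalglambda\hookrightarrow\cO_\alg$ induces a split injection $\iota_\lambda\colon K_0(\Oalglambda)\hookrightarrow K_0(\cO_\alg)$.

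Next I would assemble the evident commutative square. Applying Theorem~\ref{main result intro} with $P=B$ --- noting that $\OPalg=\cO_\alg$ in this case, the Borel acting locally finitely on every object of $\cO$ automatically --- shows that $\chcFGB\colon K_0(\cO_\alg)\to K_0(\DGcoadfl)$ is injective. Moreover $\chcFGB$ is exact with image in $\DGcoadfl$ (\ref{image finite length}), and by the inclusion $\chcFGB(\Oalglambda)\subseteq\OGBlambda$ already established it restricts to an exact functor $\Oalglambda\to\OGBlambda$; since $\OGBlambda$ is by construction a full subcategory of $\DGcoadfl$ closed under subquotients and extensions, the inclusion $\OGBlambda\hookrightarrow\DGcoadfl$ induces a homomorphism $j_\lambda\colon K_0(\OGBlambda)\to K_0(\DGcoadfl)$. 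Checking on classes of objects and extending additively, one obtains
\[
j_\lambda\circ\bigl(\chcFGB\colon K_0(\Oalglambda)\to K_0(\OGBlambda)\bigr)\;=\;\bigl(\chcFGB\colon K_0(\cO_\alg)\to K_0(\DGcoadfl)\bigr)\circ\iota_\lambda ,
\]
both composites being given by $[M]\mapsto[\chcFGB(M)]$ for $M\in\Oalglambda$.

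Finally I would conclude. The right-hand composite is injective, being a composition of the injection $\iota_\lambda$ with the injection $\chcFGB\colon K_0(\cO_\alg)\to K_0(\DGcoadfl)$ of Theorem~\ref{main result intro}; hence the equal left-hand composite is injective, which forces its first factor $\chcFGB\colon K_0(\Oalglambda)\to K_0(\OGBlambda)$ to be injective, as claimed. I do not anticipate a genuine obstacle: essentially all of the content sits in Theorem~\ref{main result intro}, and what remains is the bookkeeping of blocks together with the two easy ingredients --- stability of algebraic weights under the dot action, and the previously established inclusion $\chcFGB(\Oalglambda)\subseteq\OGBlambda$. The step most worth double-checking is the compatibility of the central-character decomposition on the $D(G)$-side with the one on $\cO$, i.e.\ that $\chcFGB$ carries the block $\Oalglambda$ into $\DGmodlambda$, since this is what makes the square above well defined.
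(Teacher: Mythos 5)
Your proposal is correct and follows essentially the same route as the paper: both form the commutative square with vertical inclusions into $K_0(\cO_\alg)$ and $K_0(\DGcoadfl)$, use the block decomposition $\cO_\alg = \bigoplus \Oalgmu$ to get injectivity of the left vertical, invoke Theorem~\ref{main result intro} for the bottom horizontal, and conclude by factoring. The additional checks you flag --- stability of $\frt^*_\alg$ under the dot action, and the inclusion $\chcFGB(\Oalglambda)\subseteq\OGBlambda$ from \ref{Image of Oalglambda} --- are exactly the inputs the paper relies on (the latter explicitly, the former implicitly), so there is no divergence in method.
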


As an application of Theorem \ref{main result v2 intro}, in Section \ref{Section on translation}, we explain how the functor $\chcFGB$ interacts with translation functors at the level of Grothendieck groups. We consider the translation functors $\Tlamu$ and $\cTlamu$ (cf. \cite[2.4.1]{JLS} and \cite[2.4.5]{JLS}). In that direction we have the following result. 

\vskip 8pt

\begin{theorem}[\ref{commutative theorem}]
Let $\lambda, \mu\in \frt_E^*$ be such that $\mu-\lambda$ is algebraic. Then the functors $\chcFGB$, $\Tlamu$ and  $\cTlamu$ give rise to the following commutative diagram of abelian groups.
\[
\begin{tikzcd}
K_0(\Oalglambda) \arrow[d, "\Tlamu"] \arrow[rr, "\chcFGB", hook] & & K_0\left(\OGBlambda\right)\arrow[d, "\cTlamu"]\\
K_0(\Oalgmu) \arrow[rr, "\chcFGB", hook] & & K_0\left(\OGBmu\right)
\end{tikzcd}
\]
\vskip 8pt
where the horizontal maps are injective. Additionally, if $\lambda,\mu$ satisfy the conditions (ii) and (iii) in \ref{categirical equivalence}, then the vertical maps are isomorphisms.
\end{theorem}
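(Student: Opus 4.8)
The plan is to prove that the square commutes already at the level of functors and then to read off the remaining assertions formally. The injectivity of the two horizontal maps is immediate from Theorem~\ref{main result v2}, applied to $\lambda$ and to $\mu$, together with the fact recalled above that $\chcFGB$ carries $\cO_{{\rm alg},|\kappa|}$ into $\OGB_{|\kappa|}\subseteq\DGmod_{|\kappa|}$ for every weight $\kappa$; so the two genuine points are the commutativity and, under the extra hypotheses, the bijectivity of the vertical arrows.

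To obtain commutativity I would first unwind the definitions of the translation functors. Both are built from the same recipe: letting $\nu$ be the dominant $W$-conjugate of $\mu-\lambda$ --- a genuine dominant weight precisely because $\mu-\lambda$ is assumed algebraic, so that $L(\nu)$ is the finite-dimensional simple $\frg_E$-module integrating to an irreducible algebraic $\bG$-representation $V_\nu$ --- one has $\Tlamu=\pr_{|\mu|}\circ(-\otimes_E L(\nu))\circ\pr_{|\lambda|}$ on $\UgEmod$ (\cite[2.4.1]{JLS}) and $\cTlamu=\pr_{|\mu|}\circ(-\otimes_E V_\nu)\circ\pr_{|\lambda|}$ on $\DGmod$ (\cite[2.4.5]{JLS}). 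Since on $\Oalglambda$ (respectively $\OGBlambda$) the leading projection $\pr_{|\lambda|}$ acts as the identity, it suffices to produce, naturally in $M\in\Oalglambda$, an isomorphism
\[
\chcFGB\bigl(\pr_{|\mu|}(M\otimes_E L(\nu))\bigr)\;\cong\;\pr_{|\mu|}\bigl(\chcFGB(M)\otimes_E V_\nu\bigr).
\]

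This would follow from two ingredients. The first is the tensor identity $\chcFGB(M\otimes_E L(\nu))\cong\chcFGB(M)\otimes_E V_\nu$, natural in $M$ --- the projection formula for the induction functor $\DG\otimes_{\DgB}(-)$ against the Hopf algebra $\DG$, which belongs to the formalism of \cite{AS, O-S2}; here one must check that the $\DgB$-module structure on $M\otimes_E L(\nu)$ coming from category $\cO$ (diagonal $\frg_E$-action, with $L(\nu)$ carrying its algebraic $B$-action) is the one for which that formula is stated. The second is that $\chcFGB$ is exact, additive, and block-preserving, i.e. sends $\cO_{{\rm alg},|\kappa|}$ into $\DGmod_{|\kappa|}$; applying it to the finite canonical block decomposition $M\otimes_E L(\nu)=\bigoplus_{[\kappa]}\pr_{|\kappa|}(M\otimes_E L(\nu))$ then gives a decomposition of $\chcFGB(M)\otimes_E V_\nu$ whose $[\kappa]$-summand lies in $\DGmod_{|\kappa|}$, and since the splitting of a $D(G)$-module into generalized $\frz_E$-eigenspaces is unique, the $[\mu]$-summands on the two sides must agree. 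Together these yield $\chcFGB\circ\Tlamu\cong\cTlamu\circ\chcFGB$ as functors $\Oalglambda\to\OGBmu$ --- the target is $\OGBmu$ because $\Tlamu M\in\Oalgmu$, so $\chcFGB(\Tlamu M)$ is a generator of $\OGBmu$ and $\cTlamu$ is exact --- and in particular the induced $K_0$-square commutes.

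For the last clause, under conditions (ii) and (iii) of \ref{categirical equivalence} the functor $\Tlamu\colon\Oalglambda\to\Oalgmu$ is an equivalence of categories by the classical translation principle, while \ref{categirical equivalence} supplies the matching equivalence $\cTlamu\colon\OGBlambda\to\OGBmu$; both then induce isomorphisms on $K_0$. The hard part will be the first ingredient above, namely establishing the tensor identity in a form compatible with the category-$\cO$ $\DgB$-structure and verifying that the block projection $\pr_{|\mu|}$ is transported correctly under it; the remaining steps are formal once one has exactness of $\chcFGB$, the cited injectivity, and the two translation equivalences.
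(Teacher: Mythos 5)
Your argument is correct, and for the commutativity it takes a genuinely different route from the paper. The paper disposes of the commutativity of the square in a single line by citing \cite[4.1.12]{JLS} as a black box; you instead reconstruct this commutativity by hand, combining the tensor identity $\chcFGB(M\otimes_E L(\nu))\cong\chcFGB(M)\otimes_E V_\nu$ (which in the paper's notation is exactly Lemma~\ref{Commuting lemma}, quoted from \cite[2.3.4]{JLS}) with the compatibility of $\chcFGB$ with the block decomposition by generalized central characters (which is the content of the computation in the proof of~\ref{Image of Oalglambda}, using \cite[2.2.2(iv)]{JLS}). This makes your commutativity argument more self-contained and shows explicitly where the naturality comes from, at the cost of having to flag --- as you correctly do --- the verification that the $\DgB$-module structure on $M\otimes_E L(\nu)$ used in category~$\cO$ is the one for which the tensor identity is stated; in the paper that check is subsumed in the cited JLS result. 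The remaining assertions are handled the same way in both proofs: the horizontal injectivity comes from Theorem~\ref{main result v2} together with~\ref{Image of Oalglambda} (so that the images land in $K_0(\OGBlambda)$ and $K_0(\OGBmu)$), and under conditions~(ii) and~(iii) of~\ref{categirical equivalence} the left vertical arrow is an isomorphism by the classical translation principle \cite[Propn.~7.8]{Hu} while the right one is an isomorphism by Theorem~\ref{categirical equivalence}.
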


\textbf{Acknowledgement} The author is grateful to his advisor Prof\text{.} Matthias Strauch for his invaluable advice during many discussion sessions throughout this project.

\section{Jordan-H\"older series for coadmissible modules}
In this section we consider a Fr\'echet-Stein algebra $A$ over $E$ and a ring extension $A \subset B$, where $B$ is an associative unital $E$-algebra. We call a $B$-module coadmissible, if it coadmissible as an $A$-module. Recall that a coadmissible $A$-module  carries a canonical topology \cite[before 3.6]{S-T2}.
\begin{dfn}
Let $M$ be a coadmissible $B$-module.

\vskip8pt

(i) We say that a $B$-module $M$ is {\it topologically simple} if it has no  closed submodule other than $0$ and $M$.

\vskip8pt

(ii)  A {\it topological composition series} of $M$ is a finite chain of closed $B$-submodules 
\[0=M_0\subsetneq M_1\subsetneq \dots \subsetneq M_n=M\]
such that $M_i/M_{i-1}$ is topologically simple for $1\leq i\leq n$.
\end{dfn}

\begin{convention} From now on, if not said otherwise, we will call a topological composition series simply a composition series. This should not lead to confusion with the concept of a composition series in the sense of the theory of (abstract) modules over a ring.
\end{convention}

\begin{lemma}\label{joho submodule}
Let $M$ be a coadmissible $B$-module. If $M$ has a composition series, then so does any closed submodule $N\subset M$. 
\end{lemma}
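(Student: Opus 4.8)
The plan is to intersect the given composition series of $M$ with $N$, and to show that the induced filtration on $N$, after deleting repetitions, is a composition series of $N$.

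First I would record the topological input. Since $A$ is Fr\'echet--Stein, the category of coadmissible $A$-modules is well behaved: every closed submodule of a coadmissible module is coadmissible, every Hausdorff quotient of a coadmissible module by a closed submodule is coadmissible, and any continuous $A$-linear map between coadmissible modules is automatically strict with closed image (cf. \cite{S-T2}). Applying this to the underlying $A$-module structures (coadmissibility being tested over $A$), I get that a $B$-submodule of a coadmissible $B$-module is closed if and only if it is coadmissible; equivalently, a topological composition series in the sense of the definition above is the same thing as a composition series in the abelian category of coadmissible $B$-modules.

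Now let $0 = M_0 \subsetneq M_1 \subsetneq \cdots \subsetneq M_n = M$ be a composition series and set $N_i := N \cap M_i$, a closed --- hence coadmissible --- $B$-submodule of $N$, so that $0 = N_0 \subseteq N_1 \subseteq \cdots \subseteq N_n = N$. Since $M_{i-1}\subseteq M_i$ we have $N_{i-1} = N_i \cap M_{i-1}$, so the inclusion $N_i \hookrightarrow M_i$ induces an injective $B$-linear map $N_i/N_{i-1} \hookrightarrow M_i/M_{i-1}$. This is a morphism of coadmissible $B$-modules, hence strict with closed image, and therefore identifies $N_i/N_{i-1}$ topologically with a closed $B$-submodule of $M_i/M_{i-1}$. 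As $M_i/M_{i-1}$ is topologically simple, that closed submodule is either $0$ or all of $M_i/M_{i-1}$; in the first case $N_i = N_{i-1}$, and in the second $N_i/N_{i-1}$ is topologically simple, the map being a strict bijection and hence a topological isomorphism. Deleting the indices $i$ with $N_i = N_{i-1}$ leaves a strictly increasing chain of closed $B$-submodules running from $0$ to $N$ whose successive quotients are exactly the nonzero $N_i/N_{i-1}$, all topologically simple; this is the desired composition series.

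The only real content beyond bookkeeping is the topological package invoked in the first step: that closed submodules and separated quotients of coadmissible modules remain coadmissible, and that maps of coadmissible modules are strict with closed image. This is what lets the familiar purely algebraic ``intersect the filtration'' argument go through in the topological setting --- in particular it is what guarantees that $(N\cap M_i)/(N\cap M_{i-1})$ maps \emph{onto a closed} submodule of $M_i/M_{i-1}$, rather than onto a merely dense or otherwise uncontrolled one, which is the step where a naive translation of the module-theoretic proof could break down. I expect no further obstacle; $B$-linearity throughout is automatic, all maps in sight being restrictions or quotients of $B$-linear maps.
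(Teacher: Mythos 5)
Your proof is correct and follows essentially the same strategy as the paper's: intersect the composition series of $M$ with $N$, show each successive quotient of the intersected chain embeds into the corresponding composition factor of $M$, and invoke topological simplicity. The paper's version is terser — it uses the second isomorphism theorem $(M_i\cap N)/(M_{i-1}\cap N)\cong ((M_i\cap N)+M_{i-1})/M_{i-1}\subseteq M_i/M_{i-1}$ and then simply asserts the subquotient is simple or zero — whereas you make explicit the topological input (closed submodules stay coadmissible, morphisms of coadmissible modules are strict with closed image, hence the embedded subquotient is in fact a \emph{closed} submodule of $M_i/M_{i-1}$), which is exactly the point that must be verified before topological simplicity can be applied. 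That extra care is welcome but does not change the route.
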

\begin{proof}
Take any composition series for $M$, say
\[0=M_0\subsetneq M_1 \subsetneq \dots \subsetneq M_n=M\]
Taking intersections with the submodule $N$ yields a chain of closed $B$-submodules of $N$,
\begin{numequation}\label{joho sub proof}
0=M_0\cap N\subseteq M_1\cap N\subseteq \dots \subseteq M_n\cap N=N.
\end{numequation}
It is possible for the submodules in  the chain in \ref{joho sub proof} to be equal, so it may not be a composition series. We have 

\begin{numequation}\label{joho sub proof 2}
\begin{array}{rcl} 
(M_i\cap N) /(M_{i-1}\cap N) &=& (M_i\cap N) /(M_{i-1}\cap(M_{i}\cap N))\\
&&\\
& \cong & ((M_i\cap N)+M_{i-1})/ M_{i-1}\\
&&\\
& \subseteq & M_{i}/M_{i-1}.
\end{array}
\end{numequation}
Since $M_i/M_{i-1}$ is simple, the modules $(M_i\cap N) /(M_{i-1}\cap N)$ occurring in \ref{joho sub proof 2} are simple or zero. 
\end{proof}

\begin{dfn}
Let $M$ be a coadmissible $B$-module. Consider the following two composition series for $M$
\begin{numequation}\label{equiv def 1}
0=M_0\subsetneq M_1 \dots \subsetneq M_n=M
\end{numequation}

\begin{numequation}\label{equiv def 2}
0=M'_0\subsetneq M'_1 \dots \subsetneq M'_m=M
\end{numequation}
The composition series in \ref{equiv def 1} and \ref{equiv def 2} are said to be \it{equivalent} if the following conditions hold.
\begin{itemize}
    \item[(i)] $m=n$.
    \item[(ii)] There is a permutation $\sigma$ of $\{1,\dots, n\}$ such that $M_i/M_{i-1}\cong M'_{\sigma(i)}/M'_{\sigma(i)-1}$
\end{itemize}
\end{dfn}

\vskip8pt

The goal of this section is to prove that any two composition series of a coadmissible $D$-module are equivalent. We closely follow the arguments for abstract $B$-modules given in \cite[3.2]{EH}.

\vskip8pt

\begin{lemma}\label{intersection lemma}
Let $M$ be a coadmissible $B$-module. Consider the following two composition series
\begin{numequation}\label{last term 1}
0=M_0\subsetneq M_1 \dots \subsetneq M_n=M
\end{numequation}

\begin{numequation}\label{last term 2}
0=M'_0\subsetneq M'_1 \dots \subsetneq M'_m=M.
\end{numequation}
Suppose $M'_{m-1}\neq M_{n-1}$ and consider $S= M'_{m-1}\cap M_{n-1}$. Then the inclusion maps $M_{n-1} \hra M$ and $M'_{m-1} \hra M$ induce isomorphisms 
\[M_{n-1}/S\cong M/M'_{m-1}\hskip10pt \mbox{ and  } \hskip10pt  M'_{m-1}/S \cong M/M_{n-1}\]
and hence both of these quotients are topologically simple.
\end{lemma}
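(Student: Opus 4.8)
The plan is to transcribe the classical Jordan--H\"older (butterfly) argument, taking care of the one extra topological input that the coadmissible setting requires. First I would record that $M_{n-1}$ and $M'_{m-1}$ are \emph{maximal} closed $B$-submodules of $M$: if $N$ is a closed $B$-submodule with $M_{n-1}\seq N\seq M$, then $N/M_{n-1}$ is a closed $B$-submodule of the topologically simple module $M/M_{n-1}=M_n/M_{n-1}$, hence $N=M_{n-1}$ or $N=M$; similarly for $M'_{m-1}$. Since by hypothesis $M_{n-1}\neq M'_{m-1}$, maximality forces $M_{n-1}\not\seq M'_{m-1}$ and $M'_{m-1}\not\seq M_{n-1}$. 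Consequently $S=M_{n-1}\cap M'_{m-1}$, which is closed (being an intersection of closed submodules) and hence coadmissible, is a \emph{proper} closed $B$-submodule of each of $M_{n-1}$ and $M'_{m-1}$, so that $M_{n-1}/S$ and $M'_{m-1}/S$ are again coadmissible $B$-modules.

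Next I would consider the $B$-linear map $\varphi\colon M_{n-1}/S \lra M/M'_{m-1}$ induced by the inclusion $M_{n-1}\hra M$ followed by the projection $M\tra M/M'_{m-1}$. Its kernel is $(M_{n-1}\cap M'_{m-1})/S=0$, so $\varphi$ is injective, and its image equals $(M_{n-1}+M'_{m-1})/M'_{m-1}$, which is nonzero since $M_{n-1}\not\seq M'_{m-1}$. The key point is that $\varphi$ is a morphism in the abelian category of coadmissible $A$-modules, so its image is a \emph{closed} $B$-submodule of $M/M'_{m-1}$; as $M/M'_{m-1}$ is topologically simple and the image is nonzero, $\varphi$ is also surjective, hence a bijective morphism of coadmissible modules and therefore an isomorphism $M_{n-1}/S\cong M/M'_{m-1}$ of (topological) $B$-modules. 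Exchanging the roles of the two series gives, by the same argument, an isomorphism $M'_{m-1}/S\cong M/M_{n-1}$. Since $M/M_{n-1}$ and $M/M'_{m-1}$ are topologically simple by hypothesis, so are $M_{n-1}/S$ and $M'_{m-1}/S$, which is the final assertion.

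The only step that is not purely formal module algebra is the claim that $M_{n-1}+M'_{m-1}$ equals $M$ rather than merely being dense in it; this is precisely where coadmissibility enters, through the fact that any $A$-linear map between coadmissible $A$-modules is automatically continuous and strict with closed image (so that $M_{n-1}+M'_{m-1}$, the image of the coadmissible module $M_{n-1}\oplus M'_{m-1}$ under an $A$-linear map into $M$, is closed). I expect this to be the sole real obstacle; with it in hand, the rest is a line-by-line adaptation of the argument for abstract $B$-modules in \cite[3.2]{EH}.
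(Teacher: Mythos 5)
Your proof is correct and follows essentially the same route as the paper: both arguments hinge on the fact that $M_{n-1}+M'_{m-1}$ is a closed (coadmissible) submodule of $M$, and then on maximality of $M_{n-1}$ (equivalently, topological simplicity of $M/M_{n-1}$) to conclude that the sum is all of $M$, from which the desired isomorphisms follow. The only cosmetic difference is that you package the final step as showing the induced map $\varphi$ is injective with nonzero closed image into a topologically simple target, whereas the paper shows $M_{n-1}+M'_{m-1}=M$ directly and then invokes the second isomorphism theorem; your phrasing has the small advantage of making explicit that a bijective morphism in the abelian category of coadmissible modules is automatically a topological isomorphism, which the paper leaves tacit.
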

\begin{proof}
Note first that $N := M_{n-1}+M'_{m-1}$ is again a coadmissible $A$-module, by \cite[3.4 (iii)]{S-T2}, and hence closed by \cite[3.6]{S-T2}. If $N$ would be equal to $M_{n-1}$, then $M'_{m-1}$ would be contained in $M_{n-1}$, and hence would be properly contained in $M_{n-1}$, since we assume that $M_{n-1} \neq M'_{m-1}$. This shows that $N$ properly contains $M_{n-1}$, and thus $N = M$ since $M_{n-1}$ is maximal among the closed prper $B$-submodules of $M$. 

\vskip8pt

Now we have 
\[M/M'_{m-1}=(M_{n-1}+M'_{m-1})/M'_{m-1}\cong M_{n-1}/(M_{n-1}\cap M'_{m-1}) = M_{n-1}/S.\]
Similarly, we show that $M'_{m-1}/S \cong M/M_{n-1}$.
\end{proof}

\begin{theorem}
Let $M$ be a coadmissible $B$-module. Consider any two composition series for $M$
\begin{numequation}\label{equiv ser 1}
0=M_0\subsetneq M_1 \dots \subsetneq M_n=M
\end{numequation}

\begin{numequation}\label{equiv ser 2}
0=M'_0\subsetneq M'_1 \dots \subsetneq M'_m=M.
\end{numequation}
Both of these series are equivalent.
\end{theorem}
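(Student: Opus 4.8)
The plan is to prove this Jordan–Hölder-type uniqueness result by induction on $n$, the length of the first composition series, mimicking the classical argument for abstract modules in \cite[3.2]{EH} but being careful that every submodule appearing is closed and coadmissible (so that \ref{joho submodule} and \ref{intersection lemma} apply). The base case $n=1$ is immediate: if $M$ is topologically simple, then the only composition series is $0 \subsetneq M$, so $m=1$ as well and there is nothing more to check.

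For the inductive step, assume the result for all coadmissible $B$-modules admitting a composition series of length $<n$, and consider the two series \ref{equiv ser 1} and \ref{equiv ser 2}. There are two cases. If $M_{n-1} = M'_{m-1}$, then $M_{n-1}$ is a coadmissible $B$-module with the two composition series $0 \subsetneq \dots \subsetneq M_{n-1}$ of length $n-1$ and $0 \subsetneq \dots \subsetneq M'_{m-1}$; by the inductive hypothesis these are equivalent, in particular $m-1 = n-1$, and adjoining the common top quotient $M/M_{n-1} = M/M'_{m-1}$ gives the equivalence of the original series. If $M_{n-1} \neq M'_{m-1}$, set $S = M_{n-1} \cap M'_{m-1}$, which by \cite[3.4 (iii)]{S-T2} and \cite[3.6]{S-T2} is again closed and coadmissible, and hence (being a closed submodule of $M_{n-1}$, which has a composition series) admits a composition series by \ref{joho submodule}, say $0 = S_0 \subsetneq S_1 \subsetneq \dots \subsetneq S_k = S$. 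By \ref{intersection lemma} we have the isomorphisms $M_{n-1}/S \cong M/M'_{m-1}$ and $M'_{m-1}/S \cong M/M_{n-1}$, both topologically simple. Therefore
\[
0 = S_0 \subsetneq \dots \subsetneq S_k = S \subsetneq M_{n-1}
\]
is a composition series for $M_{n-1}$ of length $k+1$, and
\[
0 = S_0 \subsetneq \dots \subsetneq S_k = S \subsetneq M'_{m-1}
\]
is a composition series for $M'_{m-1}$ of length $k+1$.

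Now I apply the inductive hypothesis twice. The module $M_{n-1}$ has the length-$(n-1)$ composition series $0 \subsetneq M_1 \subsetneq \dots \subsetneq M_{n-1}$ and the length-$(k+1)$ series ending in $S \subsetneq M_{n-1}$; equivalence forces $k+1 = n-1$ and gives a matching of the two multisets of simple subquotients. Similarly $M'_{m-1}$ has the length-$(m-1)$ series $0 \subsetneq M'_1 \subsetneq \dots \subsetneq M'_{m-1}$ and the length-$(k+1)$ series ending in $S \subsetneq M'_{m-1}$, forcing $k+1 = m-1$, hence $m = n$, and a matching of simple subquotients. Chaining these two matchings together with the observation that the two length-$(k+1)$ series through $S$ differ only in their top quotients $M_{n-1}/S$ and $M'_{m-1}/S$, which by \ref{intersection lemma} are respectively $M/M'_{m-1}$ and $M/M_{n-1}$, one sees that the multiset of simple subquotients of \ref{equiv ser 1} (namely those of $0 \subsetneq \dots \subsetneq S$, together with $M_{n-1}/S$ and $M/M_{n-1}$) coincides with that of \ref{equiv ser 2} (those of $0 \subsetneq \dots \subsetneq S$, together with $M'_{m-1}/S$ and $M/M'_{m-1}$), since $\{M_{n-1}/S,\, M/M_{n-1}\} = \{M/M'_{m-1},\, M/M_{n-1}\} = \{M'_{m-1}/S,\, M/M'_{m-1}\}$ as multisets. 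This yields the required permutation $\sigma$ and completes the induction. The only point requiring genuine care — as opposed to transcription of the classical proof — is the repeated verification that sums and intersections of the closed coadmissible submodules in play remain closed and coadmissible, so that \ref{joho submodule} and \ref{intersection lemma} are legitimately applicable; this is exactly what \cite[3.4 (iii)]{S-T2} and \cite[3.6]{S-T2} provide, so I expect no real obstacle, only bookkeeping.
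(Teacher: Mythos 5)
Your proof is correct and follows essentially the same strategy as the paper: induction on $n$, a case split on whether $M_{n-1}=M'_{m-1}$, use of the intersection $S=M_{n-1}\cap M'_{m-1}$ together with \ref{intersection lemma} to build two auxiliary composition series through $S$, and then chaining the resulting equivalences via the inductive hypothesis applied to $M_{n-1}$ and $M'_{m-1}$. The only cosmetic difference is that the paper takes $n=0$ as the base case while you start at $n=1$; you should also note the trivial $n=0$ case since the inductive hypothesis for $n=1$ implicitly needs it, but this is bookkeeping rather than a gap.
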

\begin{proof}
We will proceed by induction on $n$, which is the length of the chain in \ref{equiv ser 1}. If $n=0$, then $M=0$ and there is nothing to prove. 
\vskip 8pt
Now suppose that $n>0$. By the induction hypothesis, the theorem holds for modules which have composition series of length $\leq n-1$.
\vskip 8pt
First let us consider the case when $M_{n-1}=M'_{m-1}=: T$ say. Then $T$ inherits a composition series of length $n-1$ from \ref{equiv ser 1}. By the induction hypothesis, any two composition series for $T$ have the same length. Thus the composition series inherited from \ref{equiv ser 2} is also of length $n-1$. Hence $m-1=n-1$ and thus $m=n$. By the induction hypothesis we also have a permutation $\sigma$ of $\{1,\dots, n-1\}$ such that $M_{i}/M_{i-1}\cong M'_{\sigma(i)}/M'_{\sigma(i-1)}$. We also have $M_n/M_{n-1} = M/T = M'_n/M'_{n-1}$. So if we view $\sigma$ as a permutation of $\{1,\dots, n\}$ fixing $n$, we have the required permutation.
\vskip 8pt
Now assume that $M_{n-1}\neq M'_{m-1}$. We define $S:=M_{n-1}\cap M'_{m-1}$. Consider a composition series of $S$, which exists by \ref{joho submodule}
\[0=S_0\subsetneq S_1\subsetneq \dots \subsetneq S_t= S.\]
By \ref{intersection lemma}, the modules $M_{n-1}/S$ and $M'_{m-1}/S$ are topologically simple. So we get the following two composition series of $M$
\begin{numequation}\label{joho ser intersection 1}
0=S_0\subsetneq S_1\subsetneq \dots \subsetneq S_t= S\subsetneq M_{n-1}\subsetneq M
\end{numequation}
and
\begin{numequation}\label{joho ser intersection 2}
0=S_0\subsetneq S_1\subsetneq \dots \subsetneq S_t= S\subsetneq M'_{m-1}\subsetneq M.
\end{numequation}
The composition factors appearing in both \ref{joho ser intersection 1} and \ref{joho ser intersection 2} are the same till the the module $S$. But the last two composition factors are also going to be same, thanks to \ref{intersection lemma}. So both the composition series \ref{joho ser intersection 1} and \ref{joho ser intersection 2} are equivalent.

\vskip 8pt

Now we claim that $m=n$. The module $M_{n-1}$ inherits a composition series of length $n-1$ from \ref{equiv ser 1}. So by the induction hypothesis all composition series of $M_{n-1}$ have length $n-1$. The composition series inherited from \ref{joho ser intersection 1} has length $t+1$ and hence $t+1=n-1$. Similarly the module $M'_{m-1}$ inherits a composition series of length $m-1$ from \ref{equiv ser 2}. Again, by induction hypothesis, any two composition series of $M'_{m-1}$ have the same length, and so $t+1=m-1$ from \ref{joho ser intersection 2}. Therefore $m=n$.

\vskip 8pt

Next we show that the composition series \ref{equiv ser 1} and \ref{joho ser intersection 1} are equivalent. By the induction hypothesis the composition series of $M_{n-1}$ inherited from \ref{equiv ser 1} and \ref{joho ser intersection 1} are equivalent. So there is a permutation $\gamma$ of $\{1, \dots, n-1\}$ such that
\[S_i/S_{i-1}\cong M_{\gamma(i)}/M_{\gamma(i-1)}, (i\neq n-1) \text{ and }M_{n-1}/S\cong M_{\gamma(n-1)}/M_{\gamma(n-1)-1}\]
We view $\gamma$ as a permutation of $\{1,\dots, n\}$ which fixes $n$. Then
\[M/M_{n-1}=M_n/M_{n-1}\cong M_{\gamma(n)}/M_{\gamma(n)-1}\]
which proves that \ref{equiv ser 1} and \ref{joho ser intersection 1} are equivalent. Similarly we can show that \ref{equiv ser 2} and \ref{joho ser intersection 2} are equivalent. Thus it follows that \ref{equiv ser 1} and \ref{equiv ser 2} are equivalent.
\end{proof}

\vskip8pt

If a coadmissible $B$-module has a topological composition series of length $n$, then we call $n$ the {\it topological length} of $M$. We will usually just say ``length'' instead of ``topological length''.

\begin{lemma}\label{joho quotient}
Let $M$ be a coadmissible $B$-module of finite length and $N\subset M$ be a coadmissible $B$-submodule. Then $M/N$ is a coadmissible $B$-module of finite length.
\end{lemma}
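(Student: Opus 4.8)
The plan is to first record that $M/N$ is again a coadmissible $B$-module, so that the notion of length applies to it, and then to produce a composition series for $M/N$ by transporting one from $M$ along the quotient map.

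Since $N$ is a coadmissible $B$-submodule of $M$, it is closed by \cite[3.6]{S-T2}, and hence $M/N$ is coadmissible as an $A$-module --- thus as a $B$-module --- by \cite[3.4]{S-T2}, equipped with its canonical (Fr\'echet) topology. Now fix a composition series
\[0 = M_0 \subsetneq M_1 \subsetneq \dots \subsetneq M_n = M,\]
which exists because $M$ has finite length. For each $i$ the sum $M_i + N$ is coadmissible by \cite[3.4 (iii)]{S-T2} and closed by \cite[3.6]{S-T2}, so $\overline{M}_i := (M_i+N)/N$ is a closed coadmissible $B$-submodule of $M/N$, and we obtain a chain of closed $B$-submodules
\[0 = \overline{M}_0 \subseteq \overline{M}_1 \subseteq \dots \subseteq \overline{M}_n = M/N.\]

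Next I would examine the successive subquotients. By the isomorphism theorems $\overline{M}_i/\overline{M}_{i-1} \cong (M_i+N)/(M_{i-1}+N)$, and the inclusion $M_i \hookrightarrow M_i+N$ induces a continuous $B$-linear surjection $M_i/M_{i-1} \twoheadrightarrow (M_i+N)/(M_{i-1}+N)$: the composite $M_i \to M_i+N \to (M_i+N)/(M_{i-1}+N)$ annihilates $M_{i-1}$, and it is onto because every element of $M_i+N$ is congruent modulo $M_{i-1}+N$ to an element of $M_i$. This is a continuous surjection of coadmissible modules, hence strict by the open mapping theorem for Fr\'echet spaces, so $\overline{M}_i/\overline{M}_{i-1}$ is the quotient of $M_i/M_{i-1}$ by a closed $B$-submodule. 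As $M_i/M_{i-1}$ is topologically simple, $\overline{M}_i/\overline{M}_{i-1}$ is therefore either $0$ or topologically simple.

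Finally, deleting from the chain $(\overline{M}_i)_{0\le i\le n}$ every index $i$ with $\overline{M}_i = \overline{M}_{i-1}$ leaves a strictly increasing chain of closed $B$-submodules of $M/N$ whose successive quotients are topologically simple, i.e. a composition series of $M/N$; in particular $M/N$ has finite length, at most $n$. The only genuinely delicate point is the topological bookkeeping --- that $M_i+N$ is closed and that the induced surjection onto $\overline{M}_i/\overline{M}_{i-1}$ is strict --- and this is supplied by the cited structural results of Schneider--Teitelbaum together with the open mapping theorem, exactly as in the proof of \ref{joho submodule}.
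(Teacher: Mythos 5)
Your proof is correct and follows essentially the same route as the paper's: both pass to the chain $(M_i+N)/N$, verify closedness and coadmissibility of $M_i+N$ via \cite[3.4, 3.6]{S-T2}, and show each subquotient is topologically simple or zero before discarding the trivial terms. The only cosmetic difference is how the subquotient is identified: the paper uses the second isomorphism theorem to write it as $M_i/\bigl((M_{i-1}+N)\cap M_i\bigr)$ and appeals to $M_{i-1}$ being a maximal closed submodule of $M_i$, whereas you exhibit a continuous (strict, by the open mapping theorem) surjection $M_i/M_{i-1}\twoheadrightarrow (M_i+N)/(M_{i-1}+N)$ with closed kernel --- two phrasings of the same observation.
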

\begin{proof}
By \cite[3.6]{S-T2}, $M/N$ is coadmissible as an $A$-module and hence coadmissible as a $B$-module. We construct a composition series of $M/N$ from a composition series of $M$. Consider the following composition series of  $M$
\begin{numequation}\label{joho quotient proof 1}
0=M_0\subsetneq M_1\subsetneq \dots \subsetneq M_n=M.
\end{numequation}
Then we have a series of submodules of $M/N$
\begin{numequation}\label{joho quotient proof 2}
0=(M_0+N)/N\subseteq (M_1+N)/N\subseteq \dots \subsetneq (M_n+N)/N=M/N.
\end{numequation}
It is possible that some of  the terms in \ref{joho quotient proof 2} are equal. Each $M_i+N$ is coadmissible as an $A$-module by \cite[3.4 (iii)]{S-T2}, and hence closed by \cite[3.6]{S-T2}. The quotient $(M_i+N)/N$ is coadmissible by \cite[3.6]{S-T2}.  We have
\begin{numequation}\label{joho quotient proof 3}
\begin{array}{rcl} 
((M_i+ N)/N) /((M_{i-1}+ N)/N) &\cong& (M_i+ N) /(M_{i-1}+ N)\\
&&\\
& = & (M_{i-1}+ N+M_i) /(M_{i-1}+ N)\\
&&\\
& \cong & M_i/((M_{i-1}+ N)\cap M_i)
\end{array}
\end{numequation}
where the equality in the second step holds because $M_{i-1}\subset M_i$. We also have $M_{i-1}\subset M_{i-1}+N$ and therefore $M_{i-1}\subset(M_{i-1}+N)\cap M_i\subset M_i$. But $M_{i-1}$ is a maximal closed submodule of $M_i$. Hence the quotient $M_i/((M_{i-1}+ N)\cap M_i)$ is simple or zero. If we ommit all the terms that are zero in \ref{joho quotient proof 2}, we end up with a composition series of $M/N$. 
\end{proof}

\vskip8pt

\section{Coadmissible modules of finite length}\label{section fl}
Define $\Bcoad$ to be the full subcategory of the category $\Bmod$ of all left $B$-modules which consists of the coadmissible modules. Furthermore, we denote by $\Bcoadfl$ the full subcategory of $\Bcoad$ consisting of modules of finite length.

\begin{lemma}\label{abelian lemma} 
(i) $\Bcoad$ is an abelian category.

\vskip8pt

(ii) $\Bcoadfl$ is an abelian category. 
\end{lemma}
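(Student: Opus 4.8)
The plan is to verify the axioms of an abelian category for each of the two subcategories, leaning on the fact—recorded in the previous section—that the category of coadmissible $A$-modules (for a Fréchet–Stein algebra $A$) is abelian, which is \cite[3.4, 3.6]{S-T2}. First I would address (i). The category $\Bmod$ of all left $B$-modules is abelian, so it suffices to check that $\Bcoad$ is closed under the relevant operations. It is plainly an additive subcategory (finite direct sums of coadmissible $A$-modules are coadmissible). For a $B$-module morphism $f\colon M\to N$ between coadmissible modules, $f$ is automatically continuous for the canonical topologies, and its kernel, image and cokernel computed in $B\mbox{-}{\rm mod}$ agree with those computed in $A\mbox{-}{\rm mod}$; by the abelian-ness of the coadmissible $A$-module category these are again coadmissible $A$-modules, hence coadmissible $B$-modules. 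Since $A$-module kernels and cokernels carry their subspace/quotient $B$-action, $\Bcoad$ inherits the kernel–cokernel factorization, and the canonical map $\coker(\ker f)\to\ker(\coker f)$ is an isomorphism because it already is in the ambient abelian category $\Bmod$ (equivalently in $A\mbox{-}{\rm mod}$). This gives (i).

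For (ii), the point is that $\Bcoadfl$ is closed in $\Bcoad$ under the abelian operations, i.e. it is a Serre subcategory, and a Serre subcategory of an abelian category is abelian with the induced structure. Finite length here means the existence of a finite topological composition series in the sense of Section~2, and the uniqueness theorem proved there (Jordan–Hölder) guarantees the length is well defined and additive in short exact sequences. Concretely, given a morphism $f\colon M\to N$ in $\Bcoadfl$: the image $\im f$ is a closed coadmissible $B$-submodule of $N$, hence has finite length by \ref{joho submodule}; the kernel $\ker f\subset M$ has finite length for the same reason; and the cokernel $N/\im f$ has finite length by \ref{joho quotient}. Thus kernels, cokernels and images of morphisms between finite-length modules are again of finite length. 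Closure under extensions—needed so that $\Bcoadfl$ is a full abelian subcategory in which exact sequences coincide with those in $\Bcoad$—follows by concatenating composition series: given $0\to M'\to M\to M''\to 0$ with $M',M''$ of finite length, a composition series of $M'$ together with the preimage under $M\to M''$ of a composition series of $M''$ assembles into one for $M$. Hence $\Bcoadfl$ is an abelian category with the exact structure inherited from $\Bcoad$.

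The only genuinely non-formal inputs are already available: the abelian-ness of coadmissible modules over a Fréchet–Stein algebra, together with the Jordan–Hölder package (Lemmas~\ref{joho submodule}, \ref{joho quotient} and the composition-series theorem) which makes "finite length" behave well. The main thing to be careful about—rather than a true obstacle—is the interplay between the $B$-module structure and the $A$-module topology: one must observe that morphisms of coadmissible $B$-modules are automatically strict continuous maps, so that sub/quotient constructions do not depend on whether they are formed in $A\mbox{-}{\rm mod}$ or in $B\mbox{-}{\rm mod}$, and that closedness of submodules is preserved under sums and intersections via \cite[3.4(iii), 3.6]{S-T2}. Once that compatibility is in hand, both parts reduce to the standard fact that a suitably closed full subcategory of an abelian category is again abelian.
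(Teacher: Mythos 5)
Your proof is correct and follows essentially the same route as the paper: part (i) reduces to the abelianness of coadmissible $A$-modules via the agreement of kernels/cokernels in $A$-mod and $B$-mod, and part (ii) uses closure under subobjects and quotients via Lemmas~\ref{joho submodule} and \ref{joho quotient}. You are slightly more explicit than the paper in spelling out closure under extensions (hence under finite biproducts) by concatenating composition series, which the paper leaves implicit; this is a welcome addition rather than a divergence.
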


\begin{proof}
(i) Consider a morphism $f:M\ra N$ of coadmissible $B$-modules. Then $f$ is also a morphism of coadmissible $A$-modules. The kernel and cokernel of $f$ as a morphism of $A$-modules coincides with the respective kernel and cokernel as a morphism of $B$-modules. The category of coadmissible $A$-modules is abelian by \cite[3.5]{S-T2} and hence $\Bcoad$ is also abelian.

\vskip 8pt

(ii) $\Bcoadfl$ is a full subcategory of $\Bcoad$. Consider any $M\in \Bcoadfl$ and a coadmissible submodule $N\subset M$. Since $\Bcoad$ is abelian, it is enough to show that $N\in \Bcoadfl$ and $M/N\in \Bcoadfl$.  But that follows from \ref{joho submodule} and \ref{joho quotient} respectively. 
\end{proof}

\begin{dfn}
As defined in \cite[Appendix E]{Prest} we say that a category $\cC$ is \textit{skeletally small} if the isomorphism classes of objects in $\cC$ is a set.  
\end{dfn}

\begin{lemma}\label{Skeletally small lemma}
(i) For any ring $R$, the category of finitely generated $R$-modules is skeletally small.

\vskip 8pt

(ii) The category $\Bcoad$ is skeletally small.

\vskip 8pt

(iii) The category $\Bcoadfl$ is skeletally small.
\end{lemma}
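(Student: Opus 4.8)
The plan is to establish the three statements in order, each reducing to elementary set-theoretic bookkeeping together with the previous ones. For (i), recall that a finitely generated $R$-module $M$ is by definition generated by finitely many elements $x_1,\dots,x_n$, so the $R$-linear map $R^n\to M$ sending the $i$-th standard basis vector to $x_i$ is surjective and realizes $M$ as a quotient $R^n/K$ with $K$ an $R$-submodule of $R^n$. The submodules of $R^n$ form a subset of the power set of the set $R^n$, hence a set; therefore $\{R^n/K \mid n\in\bbN,\ K\subseteq R^n \text{ a submodule}\}$ is a \emph{set} of $R$-modules meeting every isomorphism class of finitely generated $R$-modules, and the category of finitely generated $R$-modules is skeletally small.

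For (ii), I would use the fact that coadmissibility is governed, via a fixed Fr\'echet-Stein presentation $A=\varprojlim_n A_n$, by compatible systems of finitely generated modules over the Noetherian Banach algebras $A_n$: any coadmissible $A$-module $M$ satisfies $M\cong\varprojlim_n M_n$ with $M_n:=A_n\ot_A M$ finitely generated over $A_n$ (cf. \cite{S-T2}). Since each $M_n$ is a quotient of some $A_n^{\,k}$, one has $|M_n|\le|A_n|$, whence $|M|\le\prod_n|M_n|\le\prod_n|A_n|=:\kappa$, a cardinal depending only on $A$ and not on $M$. Every coadmissible $B$-module may thus be transported, along a bijection of its underlying set onto a subset of $\kappa$, to a $B$-module structure on that subset; the collection of all such structures (each being a single function together with the choice of subset) is a set, so its set of isomorphism classes already meets every isomorphism class of objects of $\Bcoad$, and $\Bcoad$ is skeletally small. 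One could also avoid cardinalities entirely: by (i) the coherent systems $(M_n,\,A_n\ot_{A_{n+1}}M_{n+1}\car M_n)_n$ form, up to isomorphism, a set, and a $B$-module structure on such a system is bounded additional data, so $\Bcoad$ is essentially small.

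Finally, (iii) is immediate: $\Bcoadfl$ is a \emph{full} subcategory of $\Bcoad$, so two of its objects are isomorphic in $\Bcoadfl$ precisely when they are isomorphic in $\Bcoad$; hence the isomorphism classes of objects of $\Bcoadfl$ inject into those of $\Bcoad$, which form a set by (ii). The only substantive point is in (ii): one must genuinely invoke the Fr\'echet-Stein structure to get a bound uniform in $M$, and keep the $A$-module structure (which controls coadmissibility) distinct from the $B$-module structure (which controls morphisms) — handled by observing the latter is bounded extra data on a coadmissible $A$-module. Parts (i) and (iii) are purely formal.
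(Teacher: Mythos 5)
Your proposal is correct and proves the lemma, but for part (ii) it takes a genuinely different route from the paper. The paper writes $A\cong\varprojlim_n A_n$, sets $M_n:=A_n\otimes_A M$, and argues that the assignment $[M]\mapsto({\rm cl}_{A_n}(M_n))_n\in\prod_n I^{\rm fg}(A_n)$ is well-defined on isomorphism classes, then concludes the isomorphism classes form a set. You instead derive a uniform cardinality bound $|M|\le\prod_n|M_n|\le\prod_n|A_n|=\kappa$ from the presentation $M\cong\varprojlim_n M_n$, transport any $M$ to a $B$-module structure on a subset of $\kappa$, and observe that such structures form a set. Your route has the virtue of being manifestly complete: it bounds the objects themselves rather than an isomorphism invariant, so there is no question of whether the invariant separates isomorphism classes, and it automatically accounts for the extra $B$-action on top of the coadmissible $A$-structure (a point you explicitly flag). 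By contrast, the paper's map $[M]\mapsto({\rm cl}_{A_n}(M_n))_n$ is well-defined but not obviously injective nor shown to have set-sized fibers — the sequence $({\rm cl}_{A_n}(M_n))_n$ records neither the transition maps of the coherent sheaf nor the $B$-module structure — so the final ``it follows'' there is doing unexplained work that your argument avoids. For (i) the two proofs are essentially the same (you parametrize by $R^n/K$, the paper by quotients of $R^{\bbN}$ factoring through some $R^m$). For (iii) you are, if anything, slightly more careful: you correctly invoke fullness of $\Bcoadfl$ in $\Bcoad$, whereas the paper's remark that any subcategory of a skeletally small category is skeletally small is only valid for full subcategories.
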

\begin{proof}
(i) Let $Q^{\text{fg}}(R^{\bbN})$ be the set of quotients $N$ of the countably
free $R$-module $R^{\bbN}$, and which have the property that the quotient map $R^{\bbN} \ra N$ factors through some $R^m$ (which is a quotient of $R^{\bbN}$ by sending all components indexed by $k>m$ to zero). Any such $N$ is finitely generated. It is clear that $Q^{\text{fg}}(R^{\bbN})$ is a set and any finitely generated module is isomorphic to one in $Q^{\text{fg}}(R^{\bbN})$. Let $I^{\text{fg}}(R)$ be the set of isomorphism classes in $Q^{\text{fg}}(R^{\bbN})$. So every finitely generated module is isomorphic
to a module in $Q^{\text{fg}}(R^{\bbN})$ which in turn determines a unique element in $I^{\text{fg}}(R)$.

\vskip 8pt
(ii) Consider a coadmissible $B$-module $M$. Write $A$ as the projective limit 
\[A\cong \varprojlim_n A_n\]
of a countable projective system $(A_n)_{n \ge 0}$ of (left) Noetherian Banach algebras $A_n$, where for each $n \ge 0$ the ring $A_n$ is a flat right $A_{n+1}$-module via the transition map $A_{n+1} \ra A_n$, cf. \cite[sec. 3]{S-T2}. We consider the finitely generated $A_n$-module $M_n$ defined as follows
\[M_n\cong A_n\otimes_A M \;,\] 
cf. \cite[Cor. 3.1]{S-T2}. Let $I^{\rm fg}(A_n)$ be the collection of isomorphism classes of finitely generated $A_n$-modules. Then $I^{\rm fg}(A_n)$ is a set by part (i). Given two isomorphic coadmissible $B$-modules $M$ and $N$, we have $M_n\cong N_n$ for all $n$. Thus we have the equality ${\rm cl}_{A_n}(M_n) = {\rm cl}_{A_n}(N_n)$ in $I^{\rm fg}(A_n)$, where ${\rm cl}_{A_n}(M_n)$ and ${\rm cl}_{A_n}(N_n)$ are the isomorphism classes of $M_n$ and $N_n$, respectively. It follows that if $M$ and $N$ are isomorphic coadmissible $B$-modules, then the sequences $({\rm cl}_{A_n}(M_n))_n$ and $({\rm cl}_{A_n}(N_n))_n$ are identical. It follows that the collection of isomorphism classes of coadmissible $B$-modules is a set. 

\vskip 8pt

(iii) Since $\Bcoad$ is skeletally small, any subcategory of $\Bcoad$ is also skeletally small. Hence $\Bcoadfl$ is skeletally small.
\end{proof}


\begin{notn}
Given any $M\in \Bcoadfl$, we denote by $[M]$ its class in the Groethendieck group $K_0(\Bcoadfl)$.
\end{notn}

\begin{lemma}\label{Basis of K_0}
The Grothendieck group $K_0(\Bcoadfl)$ is a free abelian group with basis $\{[M] \text{ }| M \emph{ is simple}\}$.
\end{lemma}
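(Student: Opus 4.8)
The plan is to invoke the standard fact that for a skeletally small abelian category in which every object has finite length and the Jordan--Hölder theorem holds, the Grothendieck group is free abelian on the isomorphism classes of simple objects. First I would note that $\Bcoadfl$ is a skeletally small abelian category by \ref{abelian lemma}(ii) and \ref{Skeletally small lemma}(iii), and that every object has a (topological) composition series by definition, with the composition factors well-defined up to permutation by the Jordan--Hölder theorem proved above. Thus for each $M\in\Bcoadfl$ and each simple object $S$ (up to isomorphism) the multiplicity $[M:S]$ of $S$ as a composition factor of $M$ is a well-defined nonnegative integer, nonzero for only finitely many $S$.

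Next I would construct the inverse of the natural map. Let $F$ be the free abelian group on the set $\Sigma$ of isomorphism classes of simple objects in $\Bcoadfl$ (this is a set by \ref{Skeletally small lemma}(iii)), with basis $\{e_S\}_{S\in\Sigma}$. Define $\varphi\colon\{\text{objects of }\Bcoadfl\}\to F$ by $\varphi(M)=\sum_{S\in\Sigma}[M:S]\,e_S$. The key point is that $\varphi$ is additive on short exact sequences: given $0\to M'\to M\to M''\to 0$ in $\Bcoadfl$, a composition series of $M'$ together with the preimage of a composition series of $M''$ (which exists by \ref{joho quotient}) assembles into a composition series of $M$, so $[M:S]=[M':S]+[M'':S]$ for every $S$ by Jordan--Hölder. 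Hence $\varphi$ factors through a homomorphism $\bar\varphi\colon K_0(\Bcoadfl)\to F$ with $\bar\varphi([M])=\sum_S[M:S]e_S$.

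Finally I would check that $\bar\varphi$ is inverse to the homomorphism $\psi\colon F\to K_0(\Bcoadfl)$ sending $e_S\mapsto[S]$. That $\bar\varphi\circ\psi=\id_F$ is immediate since a simple object has itself as unique composition factor, so $\bar\varphi([S])=e_S$. For $\psi\circ\bar\varphi=\id$, given $M$ with composition series $0=M_0\subsetneq\cdots\subsetneq M_n=M$, the relations in $K_0$ coming from the short exact sequences $0\to M_{i-1}\to M_i\to M_i/M_{i-1}\to 0$ give $[M]=\sum_{i=1}^n[M_i/M_{i-1}]=\sum_S[M:S][S]=\psi(\bar\varphi([M]))$. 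Since the classes $[M]$ generate $K_0(\Bcoadfl)$, this shows $\psi$ and $\bar\varphi$ are mutually inverse, so $K_0(\Bcoadfl)$ is free abelian with basis $\{[S] : S\text{ simple}\}$. One should also remark that the classes $[S]$ for distinct simple $S$ are genuinely distinct in $K_0$, which follows from $\bar\varphi([S])=e_S$ being distinct basis elements; this is the only place where injectivity of $\bar\varphi$ — equivalently, well-definedness of multiplicities via Jordan--Hölder — is really used, and it is the conceptual heart of the argument even though here it reduces to bookkeeping.
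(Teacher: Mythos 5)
Your proof is correct, but it takes a genuinely different route from the paper's. The paper disposes of this lemma in one line by citing an external result (Theorem C of Enomoto), which states that for a skeletally small length abelian category the Grothendieck group is free on the isomorphism classes of simples; the hypotheses are supplied by \ref{abelian lemma}(ii) and \ref{Skeletally small lemma}(iii). You instead give a self-contained argument: you build the free group $F$ on the set of simples, define the multiplicity map $\bar\varphi$ using the Jordan--H\"older theorem proved earlier in the paper, check additivity on short exact sequences by splicing a composition series of $M'$ with the preimage of one for $M''$, and verify that $\bar\varphi$ and $\psi\colon e_S\mapsto[S]$ are mutually inverse. This is the classical argument underlying Enomoto's theorem, and it is valid here; the one point worth spelling out, which you handle implicitly, is that the splicing step requires the preimages of the closed submodules of $M''$ under the continuous surjection $M\to M''$ to be closed submodules of $M$, and that the image of $M'$ in $M$ is closed (both hold in this coadmissible setting via \cite[3.6]{S-T2}). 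What the paper's citation buys is brevity; what your argument buys is independence from the external reference and a concrete display of how the Jordan--H\"older theorem developed in Section 2 feeds directly into the structure of $K_0$.
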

\begin{proof}
This follows from \ref{Skeletally small lemma} and \cite[Theorem C]{Enomoto}.
\end{proof}

\begin{rem}
We do not know if a topologically simple coadmissible $B$-module is monogenic in the algebraic sense. Similarly, we also do not know if a coadmissible $B$-module of topologically finite length is finitely generated in the algebraic sense. 
\end{rem}

\vskip 16pt

\section{The map induced by the functor $\chcFGP$ on the Grothendieck groups}
We recall our notation from Section \ref{Intro}. Let $F$ be a finite extension of $\Qp$ and $\bf{G}$ be a split reductive group
over $F$. Let $\bf{P}\supset\bf{B}\supset \bf{T}$ be a parabolic subgroup, a Borel subgroup, and a maximal torus respectively. Denote by $\frg,\frp, \frb$ and $\frt$ their respective Lie algebras. Let $G=\textbf{G}(F)$, $P=\textbf{P}(F)$, $B=\textbf{B}(F)$ and $T=\textbf{T}(F)$ be the corresponding groups of $F$-valued points.  We fix once and for all a finite extension $E$ of $F$ which will be our field of coefficients. The base change of an $F$-vector space to $E$ will always be denoted by the subscript $E$, for example $\frg_E=\frg\otimes_F E$ and $\frb_E=\frb\otimes_F E$ etc. In this section closely follow the notation of \cite{O-S2, AS}.
\subsection{Category \texorpdfstring{$\OPalg$}{} and the functor \texorpdfstring{$\chcFGP$}{}}
Given a representation $\phi: \frt_E \ra \End_E(M)$, a weight $\lambda \in \frt^*_E$, and a positive integer $i \ge 1$ we set 

\[M_\lambda = \{m \in M \midc \forall \frx \in \frt_E: (\phi(\frx)-\lambda(\frx)\cdot \id).m = 0\} \;.\]

\vskip8pt

\vskip8pt

The category $\cO^\frp$ for the pair $(\frg,\frp)$ and the coefficient field $E$ is defined to be the full subcategory of all $\UgE$-modules $M$ which satisfy the following properties:

\vskip8pt

\begin{enumerate}
    \item  $M$ is finitely generated as a $\UgE$-module.
    \item  $M = \bigoplus_{\lambda \in \frt^*_E} M_\lambda$ \;. 
    \item  The action of $\frp_E$ on $M$ is locally finite, i.e. for every $m\in M$, the subspace $\UpE .m \subset M$ is finite-dimensional over $E$.
\end{enumerate}

\vskip8pt

When $\frp=\frb$, the category $\cO^\frp$ coincides with the Bernstein-Gelfand-Gelfand category $\cO$. We denote by $\OPalg$ the full subcategory $\cOp$ consisting of modules $M$ for which the weights lie in the image $\frt^*_\alg$ of the differential $X^*(\textbf{T})\ra \frt^*$. 

\vskip8pt
Let $D(G):=C^{\text{la}}(G,E)_b'$ be the locally analytic distribution algebra, which is the strong dual of the space of $E$-valued locally $F$-analytic functions on $G$, cf. \cite[Cor. 3.3]{S-T1}. Denote by $\DgP$ the subring of $D(G)$ generated by the universal envelopping algebra $U(\frg_E)$ and $D(P)$.

\begin{dfn}
Denote by $\Rep^{\sm}_E(L_P)^\sadm$ the category of smooth strongly admissible representations on $E$-vector spaces of the subgroup $L_P$. In the spirit of \cite{AS} we consider the functor
\[\begin{array}{ccc}
 \chcFGP   : \OPalg \times \Rep^{\sm}_E(L_P)^\sadm & \lra & \DGmod  \\
     (M,V) & \rightsquigarrow & \DG \ot_{\DgP} (M \ot_E V') \;,
\end{array}\] cf. \cite[4.2.1]{AS} (where ${\rm Lift}(M,\log)$ is the canonical lift of $M$ to a module over $D(\frg,P)$, as explained in \cite[3.2, 3.6]{O-S2}).
\end{dfn}

\vskip 8pt

Let $\Rep^{\rm la}_E(G)$ be the category of locally analytic $G$-representations on $E$-vector spaces.  We would like to point out that there is an analogue of $\chcFGP$, with image in $\Rep^{\rm la}_E(G)$ rather than $\DGmod$ as described in \cite{O-S2, O-S3}. This functor is denoted by $\cF^G_P$. The functors $\chcFGP$ and $\cFGP$ are related in the following way
\[\cFGP(M,V)=\chcFGP(M,V)'_b\]
for $M\in \OPalg$ and $V\in \Rep^{\sm}_E(L_P)^\sadm$. Here $\chcFGP(M,V)'_b$ denotes the dual of $\chcFGP(M,V)$ equipped with the strong topology.

\vskip 8pt

\subsection{$\chcFGP$ at the level of $K_0$}
In accordance with the notation introduced in Section \ref{section fl}, we will study the category $\Bcoadfl$ where $B=D(G)$. So the category $\DGcoadfl$ is abelian by \ref{abelian lemma}(ii) and skeletally small by \ref{Skeletally small lemma}(iii).

\begin{lemma}\label{image finite length}
$\chcFGP(M)\in \DGcoadfl$ for any $M\in \cO$. \end{lemma}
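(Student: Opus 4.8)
The plan is to show that $\chcFGP(M)$ is a coadmissible $D(G)$-module of finite length by reducing to the case where $M$ is simple in $\cO^{\frp}_{\alg}$, and for that case invoking the known results of Orlik--Strauch.

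First I would recall that, by \cite[Thm. 4.2.4]{AS} (or the exactness statement quoted in the introduction), the functor $\chcFGP$ is exact in its first argument. Since every $M \in \cO$ has a finite composition series (in the algebraic sense, this being a standard fact about BGG category $\cO$), it has a finite filtration whose subquotients are simple objects of $\cO$; after twisting by a sufficiently dominant algebraic weight one may arrange all weights to be algebraic, but more directly the simple subquotients of an object of $\cO^{\frp}_{\alg}$ again lie in $\cO^{\frp}_{\alg}$. Applying the exact functor $\chcFGP$ to this filtration gives a finite filtration of $\chcFGP(M)$ by $D(G)$-submodules whose subquotients are the modules $\chcFGP(L)$ for $L$ simple. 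Since, by \ref{abelian lemma}(ii) and \ref{joho submodule}, \ref{joho quotient}, the category $\DGcoadfl$ is abelian and closed under subquotients and extensions inside $\Bcoad$, it suffices to prove the lemma when $M = L$ is a simple object of $\cO^{\frp}_{\alg}$.

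Next, for $L$ simple I would appeal to the structure theory of Orlik--Strauch. By \cite[5.8]{O-S2}, when $L$ is a simple module in $\cO^{\frp}_{\alg}$ and $V$ is a smooth irreducible admissible representation of $L_P$, the locally analytic representation $\cFGP(L, V)$ is topologically irreducible (under the appropriate hypotheses on the parabolic; for $V = \triv$ and $P$ maximal for $L$ this is exactly their irreducibility criterion). Dualizing, $\chcFGP(L) = \cFGP(L)'_b$ is a topologically simple $D(G)$-module. Moreover $\chcFGP(L)$ is coadmissible: this follows from the fact that $\cFGP(L)$ is an admissible locally analytic $G$-representation (Orlik--Strauch show the image of $\cFGP$ consists of admissible representations, built from parabolic inductions of admissible pieces), and admissible locally analytic representations are precisely the duals of coadmissible $D(G)$-modules by \cite[Thm. 6.3]{S-T2}. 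Hence $\chcFGP(L)$ is a coadmissible $D(G)$-module which is topologically simple, so it lies in $\DGcoadfl$ (it has a topological composition series of length one, or length zero if it vanishes).

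The main obstacle I anticipate is bookkeeping around the hypotheses of the Orlik--Strauch irreducibility theorem: their criterion for topological irreducibility of $\cFGP(L, V)$ requires that one passes to the maximal parabolic $P(L)$ attached to the $\frp$-action on $L$, so $\chcFGP(L)$ need not itself be irreducible for the given $\frp$ but is instead a parabolic induction of an irreducible object from a larger Levi. Thus the honest argument is: write $\chcFGP(L) = \chcFGP{}^{G}_{P(L)}(L)$ via the transitivity of the functor in the parabolic (an isomorphism established in \cite{O-S2}), observe $L$ remains simple as an object of $\cO^{\frp(L)}$, and then the cited irreducibility result applies; one still gets that the result is coadmissible of finite length (indeed topologically simple). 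Apart from this, the only other routine point to check is that passing from $\cO^{\frp}_{\alg}$ to arbitrary $M \in \cO$ in the statement is harmless, which follows because $\chcFGP$ is only defined on $\cO^{\frp}_{\alg}$ and the lemma's hypothesis ``$M \in \cO$'' should be read as $M \in \cO^{\frp}_{\alg}$ (or: tensoring by an algebraic character reduces the general case to the algebraic one, and $\chcFGP$ transforms such a twist into tensoring by a locally analytic character, which preserves coadmissibility and length).
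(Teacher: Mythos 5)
Your overall strategy matches the paper's: reduce by exactness to the case of a simple $L \in \cO^\frp_\alg$, pass to the maximal parabolic $\frq$ for $L$, and invoke \cite[5.8]{O-S2}. You also correctly flag the subtlety that $\chcFGP(L)$ itself need not be irreducible. But the final step is not correct as written. The change-of-parabolic formula you want is not $\chcFGP(L) \cong \chcFGQ(L,\triv)$; by \cite[4.3.3]{AS} (this is the identity the paper cites) it is
\[
\chcFGP(L) \;\cong\; \chcFGQ\!\left(L,\ \ind^{L_Q}_{L_P(L_Q\cap U_P)}(\triv)\right),
\]
where the second argument is the smooth parabolic induction of the trivial representation inside the Levi $L_Q$. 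This representation is admissible of finite length, but in general it is \emph{not} irreducible (think of the Borel case, where one gets Steinberg plus trivial in the Levi). Consequently $\chcFGP(L)$ is \emph{not} topologically simple in general, and your parenthetical ``(indeed topologically simple)'' is false; the claim that ``the cited irreducibility result applies'' directly to $\chcFGP(L)$ is therefore not justified. The step that actually closes the argument is: $\ind^{L_Q}_{L_P(L_Q\cap U_P)}(\triv)$ has a finite Jordan--H\"older series with irreducible smooth subquotients $\pi_j$, and since $\chcFGQ$ is exact in its second argument, $\chcFGP(L)$ acquires a finite filtration by closed $D(G)$-submodules whose subquotients $\chcFGQ(L,\pi_j)$ are topologically simple by \cite[5.8]{O-S2}; hence $\chcFGP(L)$ has finite topological length. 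Aside from this, your route to coadmissibility (via admissibility of $\cFGP(L)$ and \cite[Thm. 6.3]{S-T2}) is a perfectly good alternative to the paper's direct citation of \cite[4.2.3]{AS}, and your remark that ``$M\in\cO$'' in the statement must be read as $M\in\OPalg$ is a fair observation.
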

\begin{proof}
Coadmissibilty holds by \cite[4.2.3]{AS}. Since $M$ has finite length, and because $\chcFGP$ is an exact functor, it suffices to show that $\chcFGP(M)$
has finite length when $M$ is simple as a module over the enveloping algebra. If $\frq$ is maximal for $M$ in the sense of \cite[5.2]{O-S2}, then we have $\chcFGP(M) = \chcFGQ(M,\ind^{L_Q}_{L_P(L_Q \cap U_P)}(\triv))$ by \cite[4.3.3]{AS}, where $Q$ is the standard parabolic subgroup with Lie algebra $\frq$. Since $\ind^{L_Q}_{L_P(L_Q \cap U_P)}(\triv)$ has finite length, it suffices to show that $\chcFGQ(M,\pi)$ is topologically simple as $D(G)$-module, when $\pi$ is an irreducible smooth $L_P$-representation. This is a consequence of $\cF^G_Q(M,\pi)$ being topologically irreducible, by \cite[5.8]{O-S2}. 
\end{proof}

\begin{notn}
Given an exact functor $F:\cC\ra \cD$ between two abelian categories, we get an induced homomorphism between Grothendieck groups
\begin{align*}
    &K_0(\cC)\ra K_0(\cD)\\
    &[M]\mapsto [FM].
\end{align*}
By abuse of notation we will also denote this map as $F:K_0(\cC)\ra K_0(\cD)$.
\end{notn}

We recall the following notion of maximality from \cite{O-S2}. For $M\in \cO^\frp$, we say that the parabolic subalgebra $\frp$ is \textit{maximal} for $M$ if $M$ does not lie in $\cO^\frq$ for any parabolic subalgebra $\frq$ properly containing $\frp$.  A crucial input in the proof of our main result is the following theorem of C. Breuil \cite[Cor. 2.7]{Breuil}.

\begin{theorem}\label{Breuil 2.7}
Let $M_1$ and $M_2$ be two simple objects of $\OPalg$, $Q_1$ and $Q_2$ be their maximal parabolics, $\pi_{Q_1}$ and $\pi_{Q_2}$ be two smooth admissible representations of finite length of $L_{Q_1}(F)$ and $L_{Q_2}(F)$ respectively on $E$. Then we have $\cF_{Q_1}^G(M_1, \pi_{Q_1})\cong \cF_{Q_2}^G(M_2, \pi_{Q_2})$ if and only if $Q_1 = Q_2$ and $M_1\cong M_2$ and $\pi_{Q_1}\cong \pi_{Q_2}$.
\end{theorem}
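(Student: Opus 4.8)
The plan is to prove Theorem \ref{Breuil 2.7} by reducing it to the statement of \cite[Cor. 2.7]{Breuil}, after matching up the notation. The key point is that Breuil's functor and the functor $\cF^G_Q$ used here are the same, so the theorem is essentially a restatement. Concretely, I would first recall that $\cF^G_Q(M,\pi)$ is defined for $M \in \cO^\frq_{\alg}$ and $\pi$ a smooth admissible $L_Q$-representation, and that by \cite[5.8]{O-S2} (combined with the maximality hypothesis) it is topologically irreducible when $M$ is simple and $\pi$ is irreducible. The main content, however, is the \emph{injectivity} of the assignment $(Q, M, \pi) \mapsto \cF^G_Q(M,\pi)$ up to isomorphism, which is precisely \cite[Cor. 2.7]{Breuil}.

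First I would address the ``if'' direction, which is immediate: if $Q_1 = Q_2 =: Q$, $M_1 \cong M_2$, and $\pi_{Q_1} \cong \pi_{Q_2}$, then functoriality of $\cF^G_Q$ in both arguments gives $\cF^G_Q(M_1, \pi_{Q_1}) \cong \cF^G_Q(M_2, \pi_{Q_2})$. For the ``only if'' direction, suppose $\cF^G_{Q_1}(M_1, \pi_{Q_1}) \cong \cF^G_{Q_2}(M_2, \pi_{Q_2})$. Since $M_1$ is simple in $\OPalg$ with maximal parabolic $Q_1$, and similarly for $M_2$, I would invoke \cite[Cor. 2.7]{Breuil} directly: that result asserts exactly that, under these maximality and simplicity hypotheses (and finite length of the smooth representations), an isomorphism of the locally analytic representations forces $Q_1 = Q_2$, $M_1 \cong M_2$, and $\pi_{Q_1} \cong \pi_{Q_2}$. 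The only verification needed is that our $\OPalg$ and our $\cF^G_P$ agree with the categories and functors in \cite{Breuil}; this is a matter of unwinding definitions (algebraic weights, local finiteness of the parabolic action, the lift $\Lift(M,\log)$), and I would spell out the dictionary so that the hypotheses of Breuil's corollary are literally met.

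The main obstacle — or rather, the only thing requiring care — is the bookkeeping of conventions: Breuil works with locally analytic $G$-representations over $E$ and a possibly different normalization of the functors $\cF^G_Q$, whereas we have been working on the dual side with $D(G)$-modules via $\chcFGP(M,V) = \DG \ot_{\DgP}(M \ot_E V')$ and the relation $\cFGP(M,V) = \chcFGP(M,V)'_b$. So I would be careful to state Theorem \ref{Breuil 2.7} on the locally analytic side (as written), matching Breuil verbatim, and only afterward transport it to the $D(G)$-module side via strong duality when it is applied in the proof of the main result. Once the dictionary is in place, there is no further argument: the theorem is \cite[Cor. 2.7]{Breuil}.
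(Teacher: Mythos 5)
Your proposal is correct and matches the paper's treatment exactly: the paper offers no independent proof of this statement, but rather presents it as a restatement of \cite[Cor.\ 2.7]{Breuil} (``A crucial input\ldots is the following theorem of C.\ Breuil''), which is precisely what you do. Your added remarks about the trivial ``if'' direction and the dual-side ($D(G)$-module vs.\ locally analytic representation) bookkeeping are sound and consistent with how the result is later applied in the proof of \ref{our main result}.
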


\vskip8pt
Consider $\lambda\in \frt_E^*$. Following \cite{Hu}, we will denote by $L(\lambda)$ the simple highest weight module of weight $\lambda$ in $\cO$. Thus $L(\lambda)$ is the unique irreducible quotient of the Verma module $U(\frg_E)\otimes_{U(\frb_E)} \textbf{1}_{\lambda}$. Now we present our main result.

\begin{theorem}\label{our main result}
The induced map $\chcFGP:K_0(\OPalg)\ra K_0(\DGcoadfl)$ is injective.  
\end{theorem}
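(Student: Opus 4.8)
The plan is to compare Jordan--H\"older factors, using that both Grothendieck groups are free abelian on simple objects. Since every object of $\OPalg$ has finite length with composition factors again in $\OPalg$, the classical Jordan--H\"older theorem shows $K_0(\OPalg)$ is free abelian on the classes $[L(\lambda)]$ of the simple objects $L(\lambda)\in\OPalg$, and by \ref{Basis of K_0} the group $K_0(\DGcoadfl)$ is free abelian on the classes of topologically simple coadmissible $D(G)$-modules. So I would fix an element $x=\sum_{i=1}^r n_i\,[L(\lambda_i)]$ of $\ker(\chcFGP)$ with pairwise distinct weights $\lambda_i$ and $n_i\in\bbZ$, and reduce to proving $n_i=0$ for every $i$.

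For each $i$, let $\frq_i$ be the parabolic subalgebra maximal for $L(\lambda_i)$ and let $Q_i\supseteq P$ be the corresponding standard parabolic subgroup. As in the proof of \ref{image finite length}, \cite[4.3.3]{AS} gives
\[
\chcFGP(L(\lambda_i)) \;=\; \check{\cF}^G_{Q_i}\!\left(L(\lambda_i),\, \ind^{L_{Q_i}}_{L_P(L_{Q_i}\cap U_P)}(\triv)\right),
\]
and $\pi_i:=\ind^{L_{Q_i}}_{L_P(L_{Q_i}\cap U_P)}(\triv)$ is a parabolically induced, hence smooth admissible and of finite length, representation of $L_{Q_i}$. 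Fixing a Jordan--H\"older filtration of $\pi_i$ with irreducible smooth admissible subquotients $\sigma_{i,1},\dots,\sigma_{i,k_i}$ (with $k_i\geq 1$, as $\pi_i\neq 0$) and using exactness of $\check{\cF}^G_{Q_i}$ in the second argument (\cite[Thm.\ 4.2.4]{AS}), I get
\[
\big[\chcFGP(L(\lambda_i))\big] \;=\; \sum_{j=1}^{k_i}\big[\check{\cF}^G_{Q_i}(L(\lambda_i),\sigma_{i,j})\big] \qquad\text{in } K_0(\DGcoadfl).
\]
Since $\frq_i$ is maximal for the simple module $L(\lambda_i)$ and $\sigma_{i,j}$ is irreducible smooth, $\cF^G_{Q_i}(L(\lambda_i),\sigma_{i,j})$ is topologically irreducible by \cite[5.8]{O-S2}, so its strong dual $\check{\cF}^G_{Q_i}(L(\lambda_i),\sigma_{i,j})$ is a topologically simple coadmissible $D(G)$-module, i.e.\ a basis element of $K_0(\DGcoadfl)$.

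The decisive step is to exclude cancellation between different indices. For this I would invoke \ref{Breuil 2.7}: since $\cFGP(M,V)=\chcFGP(M,V)'_b$ and the strong-dual functor is an anti-equivalence between coadmissible $D(G)$-modules and admissible locally analytic $G$-representations (so it preserves and reflects both isomorphism classes and topological irreducibility), \ref{Breuil 2.7} translates into a classification of the isomorphisms among the modules $\check{\cF}^G_{Q}(L,\sigma)$. In particular, for $i\neq i'$ we have $L(\lambda_i)\not\cong L(\lambda_{i'})$ because $\lambda_i\neq\lambda_{i'}$, hence $\check{\cF}^G_{Q_i}(L(\lambda_i),\sigma_{i,j})\not\cong\check{\cF}^G_{Q_{i'}}(L(\lambda_{i'}),\sigma_{i',j'})$ for all $j,j'$. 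Setting $\xi_i:=\sum_{j=1}^{k_i}\big[\check{\cF}^G_{Q_i}(L(\lambda_i),\sigma_{i,j})\big]$, the elements $\xi_1,\dots,\xi_r$ then have pairwise disjoint supports in the chosen basis of $K_0(\DGcoadfl)$, and each $\xi_i\neq 0$ (being a sum of $k_i\geq 1$ basis vectors). Consequently $\chcFGP(x)=\sum_i n_i\xi_i=0$ forces $n_i\xi_i=0$, hence $n_i=0$, for every $i$; so $x=0$ and $\chcFGP$ is injective.

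I expect the last paragraph to be routine; the real content is entirely imported, namely Breuil's classification \ref{Breuil 2.7}, the Orlik--Strauch irreducibility theorem \cite[5.8]{O-S2}, and the reduction of \ref{image finite length} expressing $\chcFGP$ of a simple module as $\check{\cF}^G_{Q_i}$ applied to a principal series of $L_{Q_i}$. The only point that requires some care is the passage between the $\cF^G_Q$-formulation, in which \ref{Breuil 2.7} and \cite[5.8]{O-S2} are phrased, and the $\check{\cF}^G_Q$-formulation needed here, which I would handle throughout via the duality $\cFGP=(\chcFGP)'_b$.
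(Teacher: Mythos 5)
Your proof is correct and follows essentially the same route as the paper: reduce to the basis of simple modules $L(\lambda_i)$, push each $\chcFGP(L(\lambda_i))$ through \cite[4.3.3]{AS} to a $\chFGQi$ applied to the appropriate parabolically induced representation, decompose along its Jordan--H\"older factors, and then use \cite[5.8]{O-S2} for irreducibility and \ref{Breuil 2.7} for pairwise distinctness to conclude the images form part of a basis of $K_0(\DGcoadfl)$. The only cosmetic differences are that you absorb the multiplicities $\mu_{i,j}$ into the indexing of the Jordan--H\"older list, and you make the duality passage between $\cFGP$ and $\chcFGP$ explicit where the paper leaves it tacit.
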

\begin{proof}
The set $\{[L(\lambda)]\text{ }| \text{ }L(\lambda)\in \OPalg\}$  forms a $\bbZ$-basis of $K_0(\OPalg)$, cf. \cite[Sec. 1.11]{Hu}. So any arbitrary element $\xi \in K_0(\cO)$ is of the form
\[\xi=\sum_{i=1}^{n}m_i[L(\lambda_i)]\]
\vskip 4pt
where $L(\lambda_i)$ are pairwise nonisomorphic simple modules in $\cO$ and $m_i\in \bbZ$. We need to show that if $\chcFGP(\xi)=0$, then $\xi=0$.  So assume that $\chcFGP(\xi)=0$. Then 
\begin{numequation}
0=\chcFGP\left(\sum_{i=1}^nm_i[L(\lambda_i)]\right)=\sum_{i=1}^nm_i\chcFGP\left([L(\lambda_i)]\right)
\end{numequation}
\vskip 4pt
It follows that 
\begin{numequation}
\sum_{m_i>0}m_i\chcFGP\left([L(\lambda_i)]\right)=\sum_{m_i<0}-m_i\chcFGP\left([L(\lambda_i)]\right)
\end{numequation}
\vskip 4pt
which is equivalent to
\begin{numequation}\label{positive and negative m's}
\sum_{m_i>0}m_i\left[\chcFGP(L(\lambda_i))\right]=\sum_{m_i<0}-m_i\left[\chcFGP(L(\lambda_i))\right].
\end{numequation}
\vskip 4pt
Let $\frq_i$ be the standard parabolic subalgebra which is maximal for $L(\lambda_i)$ in the sense of \cite{O-S2}. Denote by $Q_i$ the subgroup which corresponds to $\frq_i$. Now we have 
\begin{numequation}\label{PQ formula}
\chcFGP(L(\lambda_i)) \; = \; \chcFGP(L(\lambda_i), \mathbf{1}) \; \cong \;  \chFGQi(L(\lambda_i), \text{ind}^{Q_i}_B(\mathbf{1})) \; \cong \; \chFGQi(L(\lambda_i), \text{ind}^{L_{Q_i}}_{B\cap L_{Q_i}}(\mathbf{1})) \;.
\end{numequation}
\vskip 4pt
where the isomorphism in the second step follows from \cite[4.3.3]{AS}. Let $\pi_{i,1}, \pi_{i,2}, \dots, \pi_{i, k_i}$ be the distinct Jordan-H\"older factors of $\text{ind}^{L_{P_i}}_{B\cap L_{P_i}}(\mathbf{1})$ with multiplicities $\mu_{i,1}, \mu_{i,2}, \dots, \mu_{i,k_i}$ respectively. Since $\chFGQi$ is exact, it follows that
\begin{numequation}\label{FPG simplicity OS2}
\begin{array}{rcl} 
\left[\chFGQi(L(\lambda_i), \text{ind}^{L_{Q_i}}_{B\cap L_{Q_i}}(\mathbf{1}))\right] &=& \sum_{j=1}^{k_i} \mu_{i,j} \cdot  \left[\chFGQi(L(\lambda_i), \pi_{i,j})\right]\\
&&\\
\end{array}
\end{numequation}
\vskip 4pt
Combining \ref{PQ formula} and \ref{FPG simplicity OS2} we conclude that 
\begin{numequation}\label{JH of F(L(lambda))}
\left[\chcFGP(L(\lambda_i))\right]=\sum_{j=1}^{k_i} \mu_{i,j}\cdot \text{ }\left[\chFGQi(L(\lambda_i), \pi_{i,j}\right].
\end{numequation}
Now it follows from \ref{positive and negative m's} and \ref{JH of F(L(lambda))}
\begin{numequation}\label{positive and negative multiset}
\sum_{m_i>0} \left(\sum_{j=1}^{k_i} m_i\mu_{i,j} \cdot \left[\chFGQi(L(\lambda_i), \pi_{i,j}\right]\right)=\sum_{m_i<0} \left(\sum_{j=1}^{k_i} -m_i\mu_{i,j} \cdot \left[\chFGQi(L(\lambda_i), \pi_{i,j})\right]\right).
\end{numequation}
\vskip 8pt
Recall that the set $S=\{[M]\text{ }|\text{ } M\in \DGcoadfl \text{ is simple}\}$ forms a basis for $K_0(\DGcoadfl)$ by \ref{Basis of K_0}. By \cite[5.8]{O-S2}, the representations $\chFGQi(L(\lambda_i), \pi_{i,j})$ appearing in \ref{positive and negative multiset} are all irreducible. So the terms $\left[\chFGQi(L(\lambda_i), \pi_{i,j})\right]$ are all elements of the basis $S$. Furthermore, since the $L(\lambda_i)$'s are pairwise nonisomorphic, the terms $\left[\chFGQi(L(\lambda_i), \pi_{i,j})\right]$ are all pairwise distinct by \ref{Breuil 2.7}. So the equality in \ref{positive and negative multiset} holds only if the product $m_i\mu_{i, j}=0$ for all $i,j$. But $\mu_{i,j}$ is nonzero by definition. Hence we must have $m_i=0$ for all $i$. Thus $\xi =\sum_{i=1}m_i[L(\lambda_i)]=0$.
\end{proof}

\section{Relation with translation functors}\label{Section on translation}
Our goal in this section is to explain how the functor $\chcFGB$ (for a fixed Borel subgroup $B\subset G$) interacts with translation functors at the level of Grothendieck groups (see \ref{commutative theorem}). We give a brief overview of translation functors in \ref{translation subseection}. Translation functors for Lie algebra representations have been studied in \cite{BeGe, JantzenModuln, Hu}. Translation functors for locally analytic representations were introduced in \cite{JLS}. We closely follow the notation of \cite{JLS}.

\subsection{Translation functors}\label{translation subseection}
Let $\frz_E$ be the center of $U(\frg_E)$ and let $\Max(\frz_E)$ be the set of maximal ideals of $\frz_E$. Denote by $\Ugzfin$ the full subcategory of $\UgE$-mod consisting of modules $M$ such that each $m \in M$ is annihilated by an ideal of finite codimension in $\frz_E$. Modules $M$ in this category have a direct sum decomposition of the form 
\[M = \bigoplus_{\frm \in \Max(\frz_E)} M_\frm\]
where $M_\frm$ is the submodule of $M$ consisting of elements which are annihilated by powers of $\frm$. Denote by $\Ugfrm$ the subcategory of modules $M$ with the property that $M = M_\frm$. The projection $M \ra M_\frm$ is denoted by $\pr_\frm$. Translation functors are endo-functors of $\Ugzfin$ of the form

\[M \rightsquigarrow \pr_\frn (L \ot_E \pr_\frm(M)) \;,\]

\vskip8pt
where $L$ is a finite-dimensional irreducible representation of $\frg_E$, and $\frm,\frn \in \Max(\frz_E)$. 

\vskip8pt
For a weight $\lambda\in\frt_E^*$ let  $\chi_{\lambda}$ be the central character of $\frz_E$ associated to $\lambda$. Let $|\lambda|$ be the orbit of $\lambda$ under the dot action of the Weyl group $W$ of $(\bG,\bT)$. Consider the maximal ideal $\ker(\chi_\lambda)$. In accordance with the notation introduced in \cite{JLS}, we write $\pr_{|\lambda|}$ and $\Uglambda$ instead of $\pr_{\ker(\chi_\lambda)}$ and $\Ugmod_{\ker(\chi_\lambda)}$, respectively. Notice that $\pr_{|\lambda|}=\pr_{|\theta|}$ and $\UgE\text{-mod}_{|\lambda|}=\UgE\text{-mod}_{|\theta|}$ for any $\theta\in |\lambda|$ by the Harish-Chandra Theorem for reductive Lie algebras, cf. \cite[4.115]{KnappVogan_Cohomological}.

\vskip 8pt
\begin{dfn}
Given $\lambda,\mu \in \frt^*_E$ such that $\nu := \mu -\lambda$ is integral, the translation functor $\Tlamu$ is the defined by

\begin{numequation}\label{intro T functor}
\begin{array}{lccc}\Tlamu: & \Ugzfin & \ra & \Ugzfin\\
&&&\\
& M & \rightsquigarrow & \pr_{|\mu|} (L(\onu) \ot_E \pr_{|\lambda|}(M)) \;,
\end{array}
\end{numequation}

\vskip8pt

where $\onu$ is the dominant weight in the (linear) Weyl orbit of $\nu$ and $L(\onu)$ the irreducible finite-dimensional module with highest weight $\onu$.
\end{dfn}

Now we review the construction of translation functors for $D(G)$-modules. There is a canonical injective algebra homomorphism $\UgE \hra \DG$ under which $\frz_E$ is mapped into the center of $D(G)$. As a consequence, the categories $\Dzfin$ and $\Dlambda$ can be readily defined.

\vskip8pt
\begin{dfn}
The translation functor 

\begin{numequation}\label{intro cT functor}
\begin{array}{lccc}\cTlamu: & \Dzfin & \ra & \Dzfin\\
&&&\\
& M & \rightsquigarrow & \pr_{|\mu|} (L(\onu) \ot_E \pr_{|\lambda|}(M))
\end{array}
\end{numequation}

\vskip8pt

is then defined exactly as in (\ref{intro T functor}), except that we require that $\onu$ lifts to an algebraic character of $\bT$, which in turn ensures that $L(\onu)$ lifts to an algebraic representation of $\bG$.
\end{dfn}

\vskip 8pt
\subsection{Induced maps on Grothendieck groups}
We denote by $\cO_\alg$ the full subcategory of $\cO$ consisting of modules with algebraic weights. 
\begin{dfn}
We define $\OGB$ to be the smallest full subcategory of $\DGcoadfl$ which has the following properties.
\vskip 4pt
(i) It contains all modules $\chcFGB(M)$, where $M$ is in $\cO_\alg$. 

\vskip 4pt

(ii) It is closed under taking subquotients and extensions.

\vskip 8pt

For $\lambda\in\frt_{\text{alg}}^*$, we define $\OGBlambda$ to be the full subcategory of $\OGB$ consisting of modules $M$ which lie in $\DGmodlambda$.
\end{dfn}

We recall the following result from \cite[2.3.4]{JLS} which we will use in \ref{closed under tensoring}.
\begin{lemma}\label{Commuting lemma}
Let $L$ be a finite-dimensional locally $F$-analytic representation of $G$ over $E$. Consider a $\DgP$-module $M$. Then we have the isomorphism of $D(G)$-modules
\[L\otimes_E(D(G)\ot_{\DgP} M)\lra D(G)\otimes_{\DgP}(L\ot_E M)\]
\end{lemma}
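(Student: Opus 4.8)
The plan is to recognize the asserted map as an instance of the \emph{tensor identity} (projection formula) for the induction functor $D(G)\otimes_{\DgP}(-)$ twisted by a finite-dimensional representation, exactly as in \cite[2.3.4]{JLS}, and to prove it by a Hopf-algebra computation. The first step is to record the structural input: $\DgP$ is stable under the comultiplication $\Delta$ and the antipode $S$ (and $S^{-1}$) of the topological Hopf algebra $D(G)$, since it is generated by the two sub-bialgebras $U(\frg_E)$ --- generated by the primitive elements $\frg_E$ --- and $D(P)$ --- the image of the Hopf-algebra embedding $D(P)\hookrightarrow D(G)$ coming from $P\hookrightarrow G$. In particular $L$, restricted along $\UgE\hookrightarrow D(G)$ (equivalently along $\DgP\hookrightarrow D(G)$), together with $M$, gives a $\DgP$-module $L\otimes_E M$ via the diagonal action, so that both sides of the claimed isomorphism are well-defined $D(G)$-modules; throughout one needs the usual care with completed tensor products, handled as in \cite{AS,O-S2}.

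The second step is to write down the comparison map. It is cleanest to construct it in the direction $D(G)\otimes_{\DgP}(L\otimes_E M)\to L\otimes_E(D(G)\otimes_{\DgP}M)$ and then invert: in Sweedler notation $\Delta(\delta)=\sum\delta_{(1)}\otimes\delta_{(2)}$, set $\beta(\delta\otimes(\ell\otimes m))=\sum\delta_{(1)}\ell\otimes(\delta_{(2)}\otimes m)$, which avoids the antipode on the visible side. I would then check that $\beta$ is well-defined on the balanced tensor product --- using coassociativity together with $\Delta(\DgP)\subseteq\DgP\wot\DgP$ to rewrite $\delta_{(2)}b_{(2)}\otimes m=\delta_{(2)}\otimes b_{(2)}m$ for $b\in\DgP$ --- and that it is $D(G)$-linear for the diagonal action on the target, which is immediate from $\Delta$ being an algebra homomorphism. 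These are short ``Sweedler calculus'' verifications.

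The substantive step is bijectivity of $\beta$, and I see two routes. The explicit one is to write down the inverse $\ell\otimes(\delta\otimes m)\mapsto\sum\delta_{(2)}\otimes(S^{-1}(\delta_{(1)})\ell\otimes m)$ --- legitimate because $S^{-1}$ preserves $\DgP$ --- and verify that the two composites are the identity using the counit and antipode axioms; the one subtlety is that one genuinely needs $S^{-1}$ (not $S$), because $D(P)$, hence $\DgP$, is not cocommutative. The route I would actually take, to sidestep the bookkeeping, is a reduction to free modules: both $M\mapsto L\otimes_E(D(G)\otimes_{\DgP}M)$ and $M\mapsto D(G)\otimes_{\DgP}(L\otimes_E M)$ are right-exact functors on $\DgP$-modules that commute with arbitrary direct sums, and $\beta$ is a natural transformation between them, so it suffices to check that $\beta$ is an isomorphism when $M=\DgP$ is free of rank one. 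There it reduces to the classical untwisting isomorphism $L\otimes_E\DgP\cong\DgP^{\oplus\dim_E L}$ of left $\DgP$-modules (diagonal action on the source), after which applying $D(G)\otimes_{\DgP}(-)$ and comparing with $L\otimes_E D(G)\cong D(G)^{\oplus\dim_E L}$ completes the proof, with $\beta^{-1}$ being the isomorphism in the statement.

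The main obstacle is therefore not conceptual but organizational: getting the coassociativity/antipode contractions to line up in the correct order --- it is precisely the failure of cocommutativity of $D(P)$ that forces the use of $S^{-1}$ --- and keeping the completed tensor products under control, which is exactly why I prefer the reduction to free $\DgP$-modules over the direct computation of the inverse.
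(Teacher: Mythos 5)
The paper does not give its own proof of this statement: it is quoted verbatim from \cite[2.3.4]{JLS}, so there is no in-paper argument to compare your attempt against. What I can do is assess your proof on its merits.

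Your main line of attack --- recognizing the statement as the Hopf-algebraic tensor identity (projection formula) for the induction $D(G)\otimes_{\DgP}(-)$, observing that $\DgP$ is a sub-bialgebra stable under the antipode because it is generated by the sub-Hopf-algebras $\UgE$ and $D(P)$, and then constructing the comparison map $\beta(\delta\otimes(\ell\otimes m))=\sum\delta_{(1)}\ell\otimes(\delta_{(2)}\otimes m)$ and either exhibiting its inverse via the antipode or reducing to $M=\DgP$ by right-exactness and compatibility with direct sums --- is the standard and correct approach, and both of your routes to bijectivity go through. You are also right that the real technical care is in the topological issues around $\wot$, which you acknowledge and which are tamed here because $L$ is finite-dimensional (so the $D(G)$-action on $L$ factors through a finite-dimensional quotient, making the Sweedler manipulations legitimate).

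One genuine error in the write-up, though it does not break the argument: $D(P)$ \emph{is} cocommutative, as is $D(G)$ and $\DgP$. The comultiplication on a locally analytic distribution algebra is dual to the (pointwise, hence commutative) multiplication on $C^{\la}(P,E)$; equivalently it is push-forward along the diagonal $P\hra P\times P$, which is symmetric under the flip. Consequently $S^2=\id$ and $S=S^{-1}$ on $D(G)$ and on $\DgP$, so the distinction you emphasize between $S$ and $S^{-1}$ is vacuous here, and your remark that non-cocommutativity ``forces'' $S^{-1}$ misdiagnoses the situation. (This also removes the left/right asymmetry you would otherwise have to watch in the untwisting isomorphism $L\otimes_E\DgP\cong\DgP^{\oplus\dim_E L}$ invoked in your free-module reduction.) With that correction the proof stands.
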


\begin{lemma}\label{closed under tensoring}
Let $L \in \cO^\frg_\alg$ be finite-dimensional. Then $L$ lifts canonically to an algebraic representation of $\bG$ which we also denote by $L$.  For any $N \in \OGB$, the module $L\otimes_E N$ also lies in $\OGB$.
\end{lemma}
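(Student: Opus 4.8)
The plan is to use the minimality built into the definition of $\OGB$. I record first two formal inputs. Since $L$ is finite-dimensional over $E$, the functor $L\otimes_E(-)$ is exact on $\DGmod$ and restricts to an exact endofunctor of $\DGcoadfl$ — it preserves coadmissibility and topological finite length, just as in the construction of translation functors in \cite{JLS} — and both $\DGcoadfl$ and $\OGB$ are closed under extensions. Second, the assertion that $L$ lifts canonically to $\bG$ is the classical fact that a finite-dimensional $\frg_E$-module all of whose $\frt_E$-weights lie in $X^*(\bT)$ is the differential of a unique finite-dimensional algebraic representation of the split reductive group $\bG$; such a representation is in particular locally $F$-analytic, hence a $D(G)$-module, and restricts along the subgroup $B$ to a $\DgB$-module.

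The one step with genuine content is to show that $L\otimes_E\chcFGB(M)\in\OGB$ for every $M\in\cO_\alg$. First, $L\otimes_E M$ equipped with the diagonal $\frg_E$-action again lies in $\cO_\alg$: it is finitely generated over $U(\frg_E)$ and has a weight-space decomposition because $\cO$ is stable under tensoring with finite-dimensional modules (cf. \cite{Hu}), its weights are sums of a weight of $L$ and a weight of $M$ and hence algebraic, and the $\frb_E$-action stays locally finite since $U(\frb_E).(l\otimes m)\subseteq L\otimes_E(U(\frb_E).m)$ is finite-dimensional. Second, the canonical $\DgB$-lift of $L\otimes_E M$ agrees with the tensor product of $L$ (viewed as a $\DgB$-module via restriction of its $D(G)$-structure) and the canonical lift of $M$; this is a compatibility of the lifting functor $\Lift(-,\log)$ with tensor products that should be read off directly from its construction in \cite[3.2, 3.6]{O-S2}. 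Granting this, Lemma \ref{Commuting lemma} applied with $\frp=\frb$ gives an isomorphism of $D(G)$-modules
\[
L\otimes_E\chcFGB(M)\;=\;L\otimes_E\bigl(D(G)\otimes_{\DgB}M\bigr)\;\cong\;D(G)\otimes_{\DgB}(L\otimes_E M)\;=\;\chcFGB(L\otimes_E M),
\]
and the last term lies in $\OGB$ by part (i) of the definition of $\OGB$, since $L\otimes_E M\in\cO_\alg$.

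Finally I run the minimality argument. Let $\cC$ be the full subcategory of $\DGcoadfl$ whose objects are those $N$ with both $N\in\OGB$ and $L\otimes_E N\in\OGB$. The previous paragraph shows $\cC$ contains $\chcFGB(M)$ for every $M\in\cO_\alg$, so $\cC$ satisfies (i). For (ii), apply the exact functor $L\otimes_E(-)$ to a short exact sequence $0\to N'\to N\to N''\to 0$ in $\DGcoadfl$ to obtain a short exact sequence $0\to L\otimes_E N'\to L\otimes_E N\to L\otimes_E N''\to 0$ in $\DGcoadfl$. If $N\in\cC$ then $N',N''\in\OGB$ and $L\otimes_E N',L\otimes_E N''\in\OGB$ by closure of $\OGB$ under subquotients, so $N',N''\in\cC$; if $N',N''\in\cC$ then $N\in\OGB$ and $L\otimes_E N\in\OGB$ by closure of $\OGB$ under extensions, so $N\in\cC$. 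Thus $\cC$ is a full subcategory of $\DGcoadfl$ with properties (i) and (ii), hence $\OGB\subseteq\cC$ by minimality; in particular $L\otimes_E N\in\OGB$ for every $N\in\OGB$, which is the claim.

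The only point that is not purely formal or classical is the compatibility of the canonical lift with tensor products, needed to put Lemma \ref{Commuting lemma} in the form $\chcFGB(L\otimes_E M)\cong L\otimes_E\chcFGB(M)$; I expect this to be immediate from the explicit construction of $\Lift(-,\log)$ in \cite{O-S2} — on both sides the $D(B)$- and $U(\frg_E)$-actions are the evident diagonal ones and the lift is uniquely determined by the underlying $\frg_E$-action — but it is where the various module structures have to be matched up with care.
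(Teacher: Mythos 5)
Your proof is correct and runs along essentially the same lines as the paper's: reduce by a minimality/closure argument to the generating objects $\chcFGB(M)$ with $M\in\cO_\alg$, then invoke Lemma \ref{Commuting lemma} together with the fact that $\cO_\alg$ is stable under tensoring with finite-dimensional modules. The one subtlety you flag at the end --- that the tensor-product $\DgB$-structure on $L\otimes_E M$ coincides with its canonical lift, so that $D(G)\otimes_{\DgB}(L\otimes_E M)$ really is $\chcFGB(L\otimes_E M)$ --- is used implicitly and without comment in the paper's proof as well, and your expectation that it follows directly from the construction of $\Lift(-,\log)$ in \cite[3.2, 3.6]{O-S2} is correct.
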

\begin{proof}
Our result will be a consequence of three claims that we will prove. Our first claim is that any object of the form $L\ot_E \chcFGB(M)$ lies in $\OGB$.
\[\begin{array}{rcl}
L\ot_E \left(D(G)\ot_{\DgP} M\right) & 
\stackrel{\ref{Commuting lemma}}{\cong}& D(G)\ot_{\DgP} \left(L\ot_E M\right)\\
\end{array}\]
where $L \ot_E M\in \cO_\alg$ by \cite[Thm 1.1(d)]{Hu}. This completes the proof of our first claim.
\vskip8pt

Our second claim is the following: if an object $N$ of $\OGB$ which has the property that $L \ot_E N$ is again in $\OGB$, then the same is true for any subquotient $N'$ of $N$. Write $N' =  N_1/N_2$ with submodules $N_2\subset N_1\subset N$. Then we have the isomorphism of $D(G)$-modules
\[L \ot_E N' =  L\ot_E \left(N_1/N_2\right)\cong (L\ot_E N_1)/(L\ot_E N_2) \,.\]
Now $(L\ot_E N_1)/(L\ot_E N_2)$ is a subquotient of $L \ot_E N$ which lies in $\OGB$. The module $L\ot_E N'$ lies then in $\OGB$ since $\OGB$ is closed under taking subquotients.

\vskip8pt

Now consider a $D(G)$-module $N$ which is part of an exact sequence of the following form
\begin{numequation}
0\lra N_1\lra N \lra N_2\lra 0
\end{numequation}
where $N_1$ and $N_2$ are objects in $\OGB$ which have the property that $L \ot_E N_1$ and $L \ot_E N_2$ are in $\OGB$. Our third claim is that $L\ot_E N$ lies in $\OGB$. The functor $L\ot_E (-)$ is an exact endofunctor on $\DGmod$. So we have the exact sequence of $D(G)$-modules
\begin{numequation}
0\lra L\ot_E N_1\lra L\ot_E N\lra L\ot_E N_2\lra 0 \;.
\end{numequation}
Since $\OGB$ is closed under taking extensions, it follows that $L\ot_E N$ lies in $\OGB$.
\end{proof}

\vskip 8pt
\begin{theorem}\label{categirical equivalence}
Let $\lambda,\mu\in \frt_E^*$ satisfy the following conditions:
\begin{itemize}
    \item[(i)] $\lambda$ and $\mu$ are compatible. 
    \item[(ii)] $\lambda$ and $\mu$ are both anti-dominant.
    \item[(iii)] $W^\circ_\lambda = W^\circ_\mu$.
\end{itemize}
\vskip8pt

Then the functors 
\[\cTlamu: \OGBlambda\lra \OGBmu\]
and 
\[\cTmula: \OGBmu\lra \OGBlambda\]
\vskip 8pt
induce an equivalence of categories. One has natural isomorphisms $\cTlamu \circ \cTmula \cong \id$ and $\cTmula \circ \cTlamu \cong \id$.
\end{theorem}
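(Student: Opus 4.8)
The plan is to reduce the statement to the corresponding equivalence on the ambient module categories $\DGmodlambda$ and $\DGmodmu$. Since $\cTlamu$ and $\cTmula$ are defined as endofunctors of $\Dzfin$, the first thing to check is that they restrict to functors $\OGBlambda\to\OGBmu$ and $\OGBmu\to\OGBlambda$. For $M\in\OGBlambda$ one has $\pr_{|\lambda|}(M)=M$ since $M\in\DGmodlambda$; condition~(i) guarantees that the dominant weight $\onu$ in the linear Weyl orbit of $\mu-\lambda$ lifts to an algebraic character of $\bT$ (as it must, for $\cTlamu$ to be defined on $D(G)$-modules at all), so $L(\onu)$ is a finite-dimensional object of $\cO^\frg_\alg$ lifting to an algebraic representation of $\bG$, and \ref{closed under tensoring} gives $L(\onu)\ot_E M\in\OGB$. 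Finally $\cTlamu(M)=\pr_{|\mu|}(L(\onu)\ot_E M)$ is a direct summand of the $D(G)$-module $L(\onu)\ot_E M$ — the decomposition being by the generalized central characters of $\frz_E$, which acts through the center of $D(G)$ — hence a subquotient of an object of $\OGB$; thus $\cTlamu(M)\in\OGB$ since $\OGB$ is closed under subquotients, and $\cTlamu(M)\in\DGmodmu$ by construction, so $\cTlamu(M)\in\OGBmu$. The same argument with $\lambda$ and $\mu$ exchanged — using that $\mu-\lambda$ algebraic forces $\lambda-\mu$, and hence its dominant representative, to be algebraic as well — shows that $\cTmula$ maps $\OGBmu$ into $\OGBlambda$.

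With this in hand I would invoke the equivalence $\DGmodlambda\simeq\DGmodmu$ furnished by the translation-functor formalism of \cite{JLS}: under hypotheses (i)--(iii) the functors $\cTlamu\colon\DGmodlambda\to\DGmodmu$ and $\cTmula\colon\DGmodmu\to\DGmodlambda$ are mutually inverse equivalences, with natural isomorphisms $\cTlamu\circ\cTmula\cong\id$ and $\cTmula\circ\cTlamu\cong\id$ (this is the $D(G)$-module counterpart of the classical fact that translation between two blocks of category $\cO$ whose weights are anti-dominant with the same dot-stabilizer is an equivalence, cf.\ \cite{Hu}). Since $\OGBlambda\hookrightarrow\DGmodlambda$ and $\OGBmu\hookrightarrow\DGmodmu$ are full subcategories which, by the first paragraph, are carried into one another by $\cTlamu$ and $\cTmula$, the composites $\cTmula\circ\cTlamu$ and $\cTlamu\circ\cTmula$ restrict to endofunctors of $\OGBlambda$ and $\OGBmu$, and the natural isomorphisms above restrict to them. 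Hence $\cTlamu$ and $\cTmula$ induce an equivalence $\OGBlambda\simeq\OGBmu$ together with the asserted natural isomorphisms $\cTlamu\circ\cTmula\cong\id$ and $\cTmula\circ\cTlamu\cong\id$.

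The step I expect to be the main obstacle is the first one, namely showing that the translation functors preserve the subcategories $\OGBlambda$ and $\OGBmu$. This is exactly where one needs both that $\OGB$ is stable under tensoring with a finite-dimensional algebraic $\bG$-representation (\ref{closed under tensoring}) and that it is stable under subquotients, the latter being what lets one absorb the projection $\pr_{|\mu|}$ onto a generalized central-character component. Once that is established the remainder is formal; the only point requiring care when citing the ambient equivalence is to match the hypotheses (i)--(iii) here with the precise conditions under which \cite{JLS} proves $\DGmodlambda\simeq\DGmodmu$.
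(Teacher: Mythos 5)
Your proof is correct and follows essentially the same route as the paper's: establish that the translation functors preserve the subcategory $\OGB$ (and its blocks) by combining Lemma~\ref{closed under tensoring} for the tensoring step with closure under subquotients to absorb the projection, and then restrict the ambient equivalence $\DGmodlambda\simeq\DGmodmu$ from \cite[3.2.1]{JLS} to the full subcategories. The paper's write-up is terser but relies on exactly the same two ingredients and the same citation.
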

\begin{proof}
First we claim that $\cTlamu$ preserves the category $\OGB$. The functor $\cTlamu$ is a composition of projections and tensoring with $L(\onu)$. By \ref{closed under tensoring}, the category $\OGB$ is closed under taking tensor product with $L(\onu)$.  Since $\OGB$ is closed under taking submodules, it is preserved by projection functors.  Hence $\cTlamu$ preserves $\OGB$. 
\vskip 8pt

Now $\OGBlambda$ is a subcategory of $\DGmodlambda$. That $\OGBlambda$ is mapped to $\OGBmu$ by $\cTlamu$ follows from the fact that $\cTlamu$ maps $\DGmodlambda$ to $\DGmodmu$, cf. \cite[3.2.1]{JLS}. Similarly, $\cTmula$ maps $\OGBmu$ to $\OGBlambda$. That $\cTlamu \circ \cTmula \cong \id$ and $\cTlamu \circ \cTlamu \cong \id$ and that these functors induce equivalence of categories follows from \cite[3.2.1]{JLS}.
\end{proof}

\vskip 8pt
\begin{theorem}\label{Image of Oalglambda}
The image of $\Oalglambda$ under $\chcFGB$ is contained in $\OGBlambda$.
\end{theorem}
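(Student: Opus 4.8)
The plan is to check directly that $\chcFGB(M)$ satisfies the two defining conditions of $\OGBlambda$ for every $M \in \Oalglambda$: that it lies in $\OGB$, and that it lies in $\DGmodlambda$.

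The first condition is essentially immediate. Since $\Oalglambda$ is a full subcategory of $\cO_\alg$, any $M \in \Oalglambda$ is in particular an object of $\cO_\alg$, so $\chcFGB(M)$ is one of the modules that $\OGB$ is required to contain by definition; and it lies in $\DGcoadfl$ by \ref{image finite length}. So there is nothing to prove here.

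The real content is the second condition, $\chcFGB(M) \in \DGmodlambda$. I would argue as follows. Write $\chcFGB(M) = \DG \ot_{\DgB} \Lift(M,\log)$. The underlying $\UgE$-module of $\Lift(M,\log)$ is $M$ itself, so the hypothesis $M \in \Uglambda$ says that every element of $\Lift(M,\log)$ is annihilated by a sufficiently high power of $\ker(\chi_\lambda) \subset \frz_E$. The key point is how the $\frz_E$-action interacts with the presentation of $\chcFGB(M)$ as a tensor product: $\frz_E$ is contained in $\DgB$, and under the embedding $\UgE \hra \DG$ it lands in the center of $\DG$, so for $z \in \frz_E$, $\delta \in \DG$ and $m \in \Lift(M,\log)$ one has $z\cdot(\delta\ot m) = (z\delta)\ot m = (\delta z)\ot m = \delta\ot(z m)$, using centrality of $z$ in $\DG$ and then $z \in \DgB$. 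Iterating, $\ker(\chi_\lambda)^k$ acts on the simple tensor $\delta \ot m$ purely through its action on $m$. For an arbitrary element of $\chcFGB(M)$, written as a finite sum $\sum_i \delta_i \ot m_i$, I would choose $k$ large enough to annihilate all of $m_1,\dots,m_r$ at once; then $\ker(\chi_\lambda)^k$ annihilates the whole element. Hence $\chcFGB(M) \in \DGmodlambda$, and together with the first condition this gives $\chcFGB(M) \in \OGBlambda$.

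I do not expect a serious obstacle. The only point that needs a little care is the compatibility just used: that the lift $\Lift(M,\log)$ carries the same $\frz_E$-action as $M$ (so the annihilating powers of $\ker(\chi_\lambda)$ are unchanged), and that an element of $\frz_E$ can be slid across the balanced tensor product $\DG \ot_{\DgB} (-)$ precisely because it lies simultaneously in $\DgB$ and in the center of $\DG$.
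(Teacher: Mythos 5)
Your proof is correct, and it follows the same overall structure as the paper's: the first condition is immediate from the definition of $\OGB$, and the substance is showing $\chcFGB(M)\in\DGmodlambda$. The difference is that the paper disposes of this second step by citing \cite[2.2.2 (iv)]{JLS}, which asserts that $\pr_{|\lambda|}$ commutes with the functor $D(G)\ot_{\DgB}(-)$, whereas you give a direct self-contained argument for exactly the fact that citation encodes: since $\frz_E\subset\UgE\subset\DgB$ and $\frz_E$ maps into the center of $D(G)$, elements of $\ker(\chi_\lambda)^k$ slide across the balanced tensor product, so the $\frz_E$-generalized-weight decomposition of $D(G)\ot_{\DgB}M$ is inherited from that of $M$. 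Your remark that $\Lift(M,\log)$ has $M$ as its underlying $\UgE$-module, so the $\frz_E$-action and hence the annihilating powers of $\ker(\chi_\lambda)$ are unchanged, is the right point to flag. The only small gain from unpacking the argument as you do is that it makes visible \emph{why} the projection commutes with the tensor functor; the cost is redundancy with a lemma already recorded in \cite{JLS}.
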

\begin{proof}
It is clear that the image of $\Oalglambda$ under $\chcFGB$ is contained in $\OGB$, by definition of $\OGB$. So it is enough to prove that the image is contained in $\DGmodlambda$. For $M\in \Oalglambda$ we have  
\[\chcFGB(M) = D(G) \ot_{\DgB} M = D(G) \ot_{\DgB} \pr_{|\lambda|}(M) = \pr_{|\lambda|}\bigg(D(G) \ot_{\DgB} M\bigg)\]
where the last equality holds by \cite[2.2.2 (iv)]{JLS}. Thus $\chcFGB(M)$ lies in $\DGmodlambda$.
\end{proof}

\begin{theorem}\label{main result v2}
The induced map $\chcFGB:K_0(\Oalglambda)\ra K_0(\OGBlambda)$ is injective.
\end{theorem}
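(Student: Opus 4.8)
The plan is to deduce this from Theorem \ref{our main result}, applied with $P = B$ (so that $\OPalg = \cO_\alg$), by a short diagram chase exploiting that $\Oalglambda$ sits inside $\cO_\alg$ as a block. First I would spell out the two Grothendieck groups in play. By \cite[Sec. 1.11]{Hu}, $K_0(\cO_\alg)$ is free abelian with basis $\{[L(\nu)] : \nu \in \frt^*_\alg\}$. The category $\Oalglambda$ is the full subcategory of $\cO_\alg$ consisting of modules all of whose composition factors have central character $\chi_\lambda$; by the Harish-Chandra theorem these factors are precisely the $L(\mu)$ with $\mu$ algebraic and $\mu \in |\lambda|$. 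Hence $\Oalglambda$ is a Serre subcategory --- indeed a block direct summand --- of the finite-length abelian category $\cO_\alg$, so it is again a finite-length abelian category, and $K_0(\Oalglambda)$ is free abelian with basis $\{[L(\mu)] : \mu \in |\lambda|\cap\frt^*_\alg\}$. The exact inclusion functor $\Oalglambda \hookrightarrow \cO_\alg$ sends $[L(\mu)]$ to $[L(\mu)]$, so the induced homomorphism $K_0(\Oalglambda)\to K_0(\cO_\alg)$ identifies $K_0(\Oalglambda)$ with the free subgroup on a subset of the chosen basis; in particular it is injective. This elementary observation is the only genuinely new input.

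Next I would assemble a commutative square. By Theorem \ref{Image of Oalglambda}, $\chcFGB$ restricts to an exact functor $\Oalglambda \to \OGBlambda$, and $\OGBlambda \hookrightarrow \DGcoadfl$ is an exact inclusion of abelian categories, since $\OGBlambda$ is the intersection inside $\DGcoadfl$ of $\OGB$ (closed under subquotients and extensions by construction) with $\DGmodlambda$ (likewise closed under subquotients and extensions). One thus has a commutative square of exact functors with top row $\Oalglambda \hookrightarrow \cO_\alg$, bottom row $\OGBlambda \hookrightarrow \DGcoadfl$, and both vertical arrows given by $\chcFGB$; it commutes on the nose, because in either direction an object $M$ is sent to the single $D(G)$-module $D(G)\ot_{\DgB}M$. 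Applying $K_0$ gives an equality of homomorphisms $K_0(\Oalglambda)\to K_0(\DGcoadfl)$: the composite ``down then right'' equals the composite ``right then down'', namely $[M]\mapsto[\chcFGB(M)]$.

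Finally I would conclude. The ``right then down'' composite $K_0(\Oalglambda)\hookrightarrow K_0(\cO_\alg)\xrightarrow{\chcFGB}K_0(\DGcoadfl)$ is injective: its first factor is injective by the first paragraph, and its second factor is injective by Theorem \ref{our main result} with $P=B$. Hence the ``down then right'' composite is injective as well, and therefore its first factor $\chcFGB\colon K_0(\Oalglambda)\to K_0(\OGBlambda)$ is injective, which is exactly the assertion. I do not expect any real obstacle here: all the substance is carried by Theorem \ref{our main result}, and the one point needing a moment's care is the block decomposition of $\cO_\alg$ that makes the inclusion $K_0$-injective. (Equivalently, one could simply rerun the proof of Theorem \ref{our main result} verbatim, since every $L(\mu)$ with $\mu\in|\lambda|\cap\frt^*_\alg$ is already an object of $\cO_\alg$ and its image under $\chcFGB$ already lies in $\OGBlambda$; the diagram chase just makes this bookkeeping-free.)
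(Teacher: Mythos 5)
Your proposal is correct and follows essentially the same route as the paper: the same commutative square with vertical inclusions $K_0(\Oalglambda)\hookrightarrow K_0(\cO_\alg)$ and $K_0(\OGBlambda)\hookrightarrow K_0(\DGcoadfl)$, with injectivity of the left vertical coming from the block decomposition of $\cO_\alg$ and injectivity of the bottom horizontal coming from Theorem \ref{our main result}. Your write-up merely spells out the basis-level bookkeeping behind the block decomposition that the paper cites in one line.
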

\begin{proof}
The homomorphism of the statemet is the upper horizontal arrow in the commutative diagram 
\[
\begin{tikzcd}
K_0(\Oalglambda) \arrow[d] \arrow[rr] & & K_0\left(\OGBlambda\right)\arrow[d]\\
K_0(\cO_\alg) \arrow[rr] & & K_0\left(\DGcoadfl\right)
\end{tikzcd}
\]
\vskip 8pt
where the lower horizontal arrow is injective by \ref{our main result}. The vertical arrow on the left is injective, because $\cO_\alg = \bigoplus_{\lambda \in \frt^*_\alg/W} \Oalglambda$, where the direct sum is over the orbits of $W$ for the dot-action on $\frt^*_\alg$. Hence the composition of these maps is injective, which implies that the upper horizontal arrow is injective.
\end{proof}

\vskip 8pt
\begin{theorem}\label{commutative theorem}
Let $\lambda, \mu\in \frt_E^*$ be such that $\mu-\lambda$ is algebraic. Then the functors $\chcFGB$, $\Tlamu$ and  $\cTlamu$ give rise to the following commutative diagram of abelian groups.
\[
\begin{tikzcd}
K_0(\Oalglambda) \arrow[d, "\Tlamu"] \arrow[rr, "\chcFGB", hook] & & K_0\left(\OGBlambda\right)\arrow[d, "\cTlamu"]\\
K_0(\Oalgmu) \arrow[rr, "\chcFGB", hook] & & K_0\left(\OGBmu\right)
\end{tikzcd}
\]
\vskip 8pt
where the horizontal maps are injective. Additionally, if $\lambda,\mu$ satisfy the conditions (ii) and (iii) in \ref{categirical equivalence}, then the vertical maps are isomorphisms.
\end{theorem}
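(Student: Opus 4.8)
The plan is to verify the square at the level of functors, up to isomorphism of $D(G)$-modules; since all four functors in play are exact, this yields commutativity of the induced diagram on $K_0$, and the two assertions about the vertical maps will then follow from the categorical equivalence \ref{categirical equivalence} together with the injectivity established in \ref{main result v2}. First one records that all four maps are well-defined. The horizontal maps are induced by $\chcFGB$, which carries $\Oalglambda$ into $\OGBlambda$ by \ref{Image of Oalglambda} and is injective on $K_0$ by \ref{main result v2}. Since $\nu := \mu - \lambda$ is algebraic, so is $\onu$, so $L(\onu)$ is finite-dimensional algebraic and $\Tlamu$, $\cTlamu$ are defined. The functor $\Tlamu$ preserves $\cO_\alg$ (tensoring with $L(\onu)$ keeps weights algebraic, and $\pr_{|\mu|}$ is a direct summand) and sends $\Oalglambda$ into $\Oalgmu$; and $\cTlamu$ sends $\OGBlambda$ into $\OGBmu$ because $\OGB$ is stable under $L(\onu)\ot_E(-)$ by \ref{closed under tensoring} and under the projections $\pr_{|\cdot|}$ (being closed under submodules), while $\cTlamu$ lands in $\DGmodmu$ by \cite[3.2.1]{JLS}. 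All four functors are exact, so they descend to $K_0$.

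The heart of the argument is commutativity of the square on objects. Fix $M\in\Oalglambda$. Since $M\in\Uglambda$ and $\chcFGB(M)\in\DGmodlambda$ by \ref{Image of Oalglambda}, the projections $\pr_{|\lambda|}$ act as the identity, so $\Tlamu(M)=\pr_{|\mu|}(L(\onu)\ot_E M)$ and $\cTlamu(\chcFGB(M))=\pr_{|\mu|}(L(\onu)\ot_E\chcFGB(M))$. By \ref{Commuting lemma},
\[ L(\onu)\ot_E\chcFGB(M) \;=\; L(\onu)\ot_E\bigl(D(G)\ot_{\DgB}M\bigr)\;\cong\;D(G)\ot_{\DgB}\bigl(L(\onu)\ot_E M\bigr)\;=\;\chcFGB\bigl(L(\onu)\ot_E M\bigr), \]
where $L(\onu)\ot_E M\in\cO_\alg$ by \cite[Thm 1.1(d)]{Hu}. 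Now $L(\onu)\ot_E M$ has finite length in $\cO$, so it decomposes as a finite direct sum $\bigoplus_\theta\pr_{|\theta|}(L(\onu)\ot_E M)$ over dot-orbits $\theta$; as $\chcFGB$ is additive and carries $\pr_{|\theta|}(L(\onu)\ot_E M)$ into $\DGmod_{|\theta|}$ (by \ref{Image of Oalglambda} applied with $\theta$ in place of $\lambda$), applying $\pr_{|\mu|}$ singles out the $\theta=|\mu|$ summand, giving $\pr_{|\mu|}\bigl(\chcFGB(L(\onu)\ot_E M)\bigr)\cong\chcFGB\bigl(\pr_{|\mu|}(L(\onu)\ot_E M)\bigr)=\chcFGB(\Tlamu(M))$. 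Combining, $\cTlamu(\chcFGB(M))\cong\chcFGB(\Tlamu(M))$ as $D(G)$-modules, hence $[\cTlamu\chcFGB(M)]=[\chcFGB\Tlamu(M)]$ in $K_0(\OGBmu)$; since $K_0(\Oalglambda)$ is generated by the classes $[M]$, the diagram commutes.

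For the vertical maps, assume in addition that $\lambda,\mu$ satisfy conditions (ii) and (iii) of \ref{categirical equivalence}; together with our standing hypothesis that $\mu-\lambda$ is algebraic — which is condition (i) — \ref{categirical equivalence} applies, so $\cTlamu:\OGBlambda\to\OGBmu$ and $\cTmula:\OGBmu\to\OGBlambda$ are mutually quasi-inverse equivalences. Hence on $K_0$ one has $\cTlamu\circ\cTmula=\id$ and $\cTmula\circ\cTlamu=\id$, so the right vertical map is an isomorphism. For the left one, note that the commutative square of the previous paragraph also holds with $\lambda$ and $\mu$ interchanged (the hypotheses are symmetric), giving $\chcFGB\circ\Tmula=\cTmula\circ\chcFGB$. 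For any $\eta\in K_0(\Oalgmu)$ we then get $\chcFGB(\Tlamu\Tmula\,\eta)=\cTlamu\cTmula\,\chcFGB(\eta)=\chcFGB(\eta)$; since $\chcFGB$ is injective on $K_0(\Oalgmu)$, this forces $\Tlamu\Tmula\,\eta=\eta$, and symmetrically $\Tmula\Tlamu\,\xi=\xi$ for $\xi\in K_0(\Oalglambda)$, so $\Tlamu:K_0(\Oalglambda)\to K_0(\Oalgmu)$ is an isomorphism with inverse $\Tmula$.

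I expect the main obstacle to be the step in the second paragraph where $\chcFGB$ must be commuted past the projection functor $\pr_{|\mu|}$: the point is that $L(\onu)\ot_E M$ no longer lies in a single block of $\cO_\alg$, so one has to pass through its finite block decomposition and invoke \ref{Image of Oalglambda} (and the corresponding computation with \cite[2.2.2 (iv)]{JLS}) orbit by orbit rather than applying it in one stroke; everything else is bookkeeping around the exactness of the functors and the two cited equivalences.
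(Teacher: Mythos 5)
Your proof is correct, but it takes a more self-contained route than the paper at two places. For the commutativity of the square, the paper simply cites \cite[4.1.12]{JLS}, whereas you reprove the natural isomorphism $\cTlamu \circ \chcFGB \cong \chcFGB \circ \Tlamu$ from scratch using \ref{Commuting lemma} together with the block decomposition of $L(\onu)\ot_E M$ in $\cO_\alg$ and the fact (from the proof of \ref{Image of Oalglambda}, via \cite[2.2.2 (iv)]{JLS}) that $\chcFGB$ carries each block $\cO_{\alg,|\theta|}$ into $\DGmod_{|\theta|}$; you are right that this block-by-block bookkeeping is exactly the content one has to supply, and it is the argument one would expect \cite[4.1.12]{JLS} to contain. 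For the left vertical isomorphism under hypotheses (ii) and (iii), the paper invokes \cite[Propn. 7.8]{Hu} directly, while you instead deduce it from the already-established facts: the right vertical map is an isomorphism by \ref{categirical equivalence}, the horizontal maps are injective by \ref{main result v2}, and the square commutes in both directions, so $\chcFGB(\Tlamu\Tmula\,\eta)=\cTlamu\cTmula\,\chcFGB(\eta)=\chcFGB(\eta)$ forces $\Tlamu\Tmula=\id$ by injectivity (and symmetrically). Both routes are sound; the paper's is shorter by delegating to the references, while yours has the merit of deriving the left-hand isomorphism as a formal consequence of the other three sides of the square rather than as an independent input.
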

\begin{proof}
By \ref{Image of Oalglambda}, the images of $K_0(\Oalglambda)$ and $K_0(\Oalgmu)$ under $\chcFGB$ are contained in $K_0\left(\OGBlambda\right)$ and $K_0\left(\OGBmu\right)$ respectively. The commutativity of the diagram follows from 
\cite[4.1.12]{JLS}. The injectivity of the horizontal arrows follows from \ref{main result v2}. Now assume that $\lambda$ and $\mu$ satisfy the conditions (ii) and (iii) in \ref{categirical equivalence}. Then the left vertical map is an isomorphism by \cite[Propn. 7.8]{Hu}. The right vertical map is an isomorphism by \ref{categirical equivalence}.  
\end{proof}

\vskip8pt
\begin{example}
Let $\bf{G}=\bf{SL}_{\bf{2}}$ and $G=SL_2(F)$. Then $\frg=\frs\frl_2(F)$. Let $\frb\subset \frg$ be a Borel subalgebra, $\Delta=\Phi^+=\{2\rho\}$. Then $\frt_\alg^*=\bbZ\cdot \rho$. A weight $\lambda=c\cdot \rho$ where $c\in \bbZ$,  is dot-regular if and only if $c\neq 1$.

\vskip8pt

Let $\lambda=-\rho$ and consider $\cO_{\alg, |\lambda|}=\cO_{\alg, |-\rho|}$. The category $\cO_{\alg, |-\rho|}$ is a semisimple category and every object is isomorphic to a finite direct sum $M(-\rho)=L(-\rho)$. In this case $\chcFGB(M(-\rho))$ is a topologically simple $D(G)$-module. Thus $K_0(\OGBlambda)$ is a free $\bbZ$-module of rank $1$, generated by $[\chcFGB(M(-\rho))]$. In this case the map
\[\chcFGB:K_0(\cO_{\alg, |\lambda|})\lra K_0(\OGBlambda)\]

\vskip 4pt

is surjective, since both the domain and codomain are of rank $1$. Thus $\chcFGB$ is an isomorphism.

\vskip8pt

Now consider a weight $\lambda=c\cdot \rho$, where $c< -1$. Then $\lambda$ is antidominant and $M(\lambda)\cong L(\lambda)$. The dot-orbit of $\lambda$ is $W\cdot \lambda=\{\lambda, -\lambda-2\rho\}.$ Now $-\lambda-2\rho$ is dominant so that $L(-\lambda-2\rho)$ is finite dimensional. In this case the Grothendieck group $K_0(\cO_{\alg, |\lambda|})$ is given by 
\[K_0(\cO_{\alg, |\lambda|})=\bbZ[M(\lambda)]\oplus \bbZ[L(-\lambda-2\rho)].\]
Now $\chcFGB(M(\lambda))$ is topologically simple as a $D(G)$-module. We have the following exact sequence of smooth $G$-representations
\[0\lra \textbf{1}\lra \ind^G_B(\textbf{1})\lra \text{St}\lra 0\]
\vskip4pt

where $\text{St}$ denotes the Steinberg representation and $\textbf{1}$ denotes the trivial $1$-dimensional representation and $\ind^G_B(\textbf{1})$ denotes the smooth induction from $B$ to $G$.  Taking duals, we get the exact sequence of $D(G)$-modules
\[0\lra \text{St}'\lra \ind^G_B(\textbf{1})'\lra \textbf{1}\lra 0.\]

\vskip 4pt
Tensoring with $L(-\lambda-2\rho)$ we get the following exact sequence of $D(G)$-modules
\begin{numequation}\label{eqn in example}
0\lra L(-\lambda-2\rho)\ot_E\text{St}'\lra L(-\lambda-2\rho)\ot_E\ind^G_B(\textbf{1})'\lra L(-\lambda-2\rho)\lra 0.
\end{numequation}

\vskip 4pt
Now $\chcFGB(L(-\lambda-2\rho))\cong L(-
\lambda-2\rho)\ot_E \ind^G_B(\textbf{1})'$. Thus from \ref{eqn in example} it follows that

\[[\chcFGB(L(-\lambda-2\rho))]= [L(-\lambda-2\rho)] + [L(-\lambda-2\rho)\ot_E \text{St}']\]

\vskip 4pt
Furthermore, $L(-\lambda-2\rho)$ is absolutely irreducible, and hence $\End_{E[\bS\bL_2]}\Big(L(-\lambda-2\rho)\Big) = E$. This implies that $L(-\lambda-2\rho)\ot_E\text{St}'$ is irreducible by \cite[4.2.8]{Emerton_memoire}. It follows that

\[K_0(\OGBlambda)=\bbZ[\chcFGB(M(\lambda))]\text{ }\oplus \text{ }\bbZ[L(-\lambda-2\rho)] \text{ } \oplus\text{ }\bbZ[L(-\lambda-2\rho)\ot_E\text{St}'].\]
\vskip 4pt
Hence the image of the map
\[\chcFGB:K_0(\cO_{\alg, |\lambda|})\lra K_0(\OGBlambda)\]
has codimension $1$.
\end{example}

\vskip 4pt

\bibliographystyle{abbrv}
\bibliography{mybib}

\end{document}